\documentclass{amsart}


\usepackage[latin1]{inputenc}
\usepackage[T1]{fontenc}
\usepackage{amsmath,amsfonts,amssymb,amsthm}
\usepackage{mathtools}


\numberwithin{equation}{section}


\newtheorem{theorem}{Theorem}
\newtheorem{corollary}{Corollary}
\newtheorem{lemma}{Lemma}


\renewcommand{\leq}{\leqslant}
\renewcommand{\geq}{\geqslant}


\DeclareMathOperator{\Res}{Res}
\DeclareMathOperator{\Disc}{Disc}


\newcommand{\N}{\mathbb{N}}
\newcommand{\Z}{\mathbb{Z}}

\newcommand{\eps}{\varepsilon}

\begin{document}

\title{Nair-Tenenbaum bounds uniform with respect to the discriminant}

\author{Kevin Henriot}

\date{}

\maketitle

\begin{abstract}
  For a suitable arithmetic function $F$ and
  polynomials ${Q_{1},\dotsc,Q_{k}}$ in  $\Z[X]$,
  Nair and Tenenbaum obtained an upper bound on
  the short sum
  $\sum_{x < n \leq x+y} F\big(|Q_{1}(n)|,\dotsc,|Q_{k}(n)|\big)$
  with an implicit dependency on the discriminant of $Q_{1} \dotsm Q_{k}$.
  We obtain a similar upper bound uniform in the discriminant.
\end{abstract}

\section{Introduction}

Let $\mathcal{M}$ denote the class of multiplicative functions $f$ such that

\begin{enumerate}
\item there exists $A \geq 1$ such that $f(p^{\ell}) \leq A^{\ell}$
  for any prime $p$ and any $\ell \in \N$,
\item for all $\eps > 0$ there exists $B=B(\eps) > 0$
such that $f(n) \leq B n^{\eps}$ for any $n \in \N$.
\end{enumerate}

Let also $\alpha,\beta \in ]0,1[$.
For $f \in \mathcal{M}$ and $(a,q)=1$, Shiu \cite{shiu} proved that
\begin{equation*}
  \sum_{\substack{x < n \leq x+y \\ n \equiv a \bmod q}} f(n)
  \ll \frac{y}{\varphi(q)} \frac{1}{\log x}
  \exp\bigg(\sum_{\substack{p \leq x \\ p \nmid q}} \frac{f(p)}{p} \bigg)
\end{equation*}
in the range $q < y^{1-\beta}$, $x^{\alpha} \leq y \leq x$,
where the implicit constant depends on $A$, $B$, $\alpha$, $\beta$. Shiu's result
in \cite{shiu} is actually stated in a slightly different way, which
is however easily seen to be equivalent to the above.
This was the first bound of this generality on sums of multiplicative
functions on large subsequences of the integers, that is on arithmetic
progressions in this case, and it proved to be very useful
for different applications.

Nair \cite{nair} generalized Shiu's work to sums of the type $\sum_{n \leq x} f\big(|Q(n)|\big)$
with ${f \in \mathcal{M}}$ and $Q \in \Z[X]$. Nair and Tenenbaum \cite{nairtenenbaum} further
generalized Nair's result to functions of several variables satisfying a
property weaker than submultiplicativity. We quote their main result
here. For fixed constants $k \geq 1$, $A \geq 1$, $B \geq 1$,
$\eps > 0$, let $\mathcal{M}_{k}(A,B,\eps)$ be the class
of non-negative functions $F$ of $k$ variables such that
\begin{equation*}
  F(a_{1}b_{1},\dotsc,a_{k}b_{k}) \leq
  \min\big(A^{\Omega(a_{1} \dotsm a_{k})},B(a_{1} \dotsm a_{k})^{\eps}\big) F(b_{1},\dotsc,b_{k})
\end{equation*}
for all $a_{i}$, $b_{j}$ such that $(a_{1} \dotsm a_{k} , b_{1} \dotsm b_{k}) = 1$.
\begin{theorem}[Nair and Tenenbaum]
  \label{thm:nairtenenbaum}
  Let $k \geq 1$.
  Let $Q_{1},\dotsc,Q_{k} \in \Z[X]$ be $k$ pairwise coprime and irreducible
  polynomials.
  Let $Q = Q_{1} \dotsm Q_{k}$ and denote by $g$ its degree and
  $D$ its discriminant.
  Let $\rho_{Q_{j}}(n)$ (resp. $\rho(n)$) denote
  the number of zeroes of $Q_{j}$ (resp. $Q$) modulo $n$ for $1 \leq j \leq k$.
  Assume $Q$ has no fixed prime divisor.
  Let $0 < \alpha < 1$, $0 < \delta < 1$, $A \geq 1$ and $B \geq 1$.
  Let $\eps \leq \frac{\alpha\delta}{12g^{2}}$ and $F \in \mathcal{M}_{k}(A,B,\eps)$. We have,
  uniformly in  $x \geq c_{0}\| Q\|^{\delta}$ and $x^{\alpha} < y \leq x$,
  \begin{equation}
    \label{eq:nairtenenbaum}
    \begin{split}
      \sum_{x < n \leq x+y} F\big(&|Q_{1}(n)|,\dotsc,|Q_{k}(n)|\big) \\
      &\ll y \prod_{p \leq x} \Big( 1 - \frac{\rho(p)}{p} \Big)
      \sum_{n_{1} \dotsm n_{k} \leq x} F(n_{1},\dotsc,n_{k})
      \frac{\rho_{Q_{1}}(n_{1}) \dotsm \rho_{Q_{k}}(n_{k})}{n_{1} \dotsm n_{k}},
    \end{split}
  \end{equation}
  where $c_{0}$ depends at most on $g$, $\alpha$, $\delta$, $A$, $B$ and the implicit constant
  depends at most on $g$, $D$, $\alpha$, $\delta$, $A$, $B$.
\end{theorem}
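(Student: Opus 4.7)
I would adapt Nair's one-variable smooth/rough factorisation to several variables. Fix a cutoff $z=x^{\eta}$ for a small $\eta$ proportional to $\alpha\delta/g^{2}$. For each $n\in(x,x+y]$ and each $j$, decompose $|Q_{j}(n)|=a_{j}b_{j}$ with $a_{j}$ the $z$-smooth part and $b_{j}$ the $z$-rough part. The submultiplicative inequality defining $\mathcal{M}_{k}(A,B,\eps)$ yields
\begin{equation*}
F(|Q_{1}(n)|,\dotsc,|Q_{k}(n)|) \leq \min\!\big(A^{\Omega(b_{1}\dotsm b_{k})},\, B(b_{1}\dotsm b_{k})^{\eps}\big)\, F(a_{1},\dotsc,a_{k}).
\end{equation*}
Since every prime factor of $b_{j}$ exceeds $z$ while $b_{j}\leq|Q_{j}(n)|\ll\|Q\|x^{g}$, one typically has $\Omega(b_{1}\dotsm b_{k})=O(g^{2}/(\alpha\delta))$, so the first factor is $O(1)$; the alternative $\eps$-bound, with $\eps\leq\alpha\delta/(12g^{2})$, is calibrated to absorb the exceptional range where $b_{1}\dotsm b_{k}$ is abnormally large.

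\textbf{Sieve step.} After exchanging orders of summation, the task reduces to estimating
\begin{equation*}
\sum_{\mathbf{a}} F(a_{1},\dotsc,a_{k}) \,\#\{n\in(x,x+y]:a_{j}\mid Q_{j}(n)\ \forall j\},
\end{equation*}
where the outer sum runs over tuples $\mathbf{a}$ of $z$-smooth positive integers with $a_{1}\dotsm a_{k}\leq x^{1-o(1)}$. For pairwise coprime squarefree $a_{j}$ with $\gcd(a_{1}\dotsm a_{k},D)=1$, the Chinese remainder theorem decouples the congruences modulo distinct $a_{j}$, and the inner cardinality equals $\tfrac{y}{a_{1}\dotsm a_{k}}\prod_{j}\rho_{Q_{j}}(a_{j})$ up to a sieve error. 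A fundamental-lemma-type argument then controls this error over the full range of $\mathbf{a}$, provided the hypothesis $x\geq c_{0}\|Q\|^{\delta}$ holds. Primes dividing $D$ or the leading coefficients of the $Q_{j}$ need individual treatment, and they are precisely what introduces the dependence on $D$ in the implicit constant.

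\textbf{Main term and obstacle.} Summing over the smooth tuples $\mathbf{a}$ and re-introducing the density factor $\prod_{p\leq x}(1-\rho(p)/p)$ via a Mertens-type manipulation reconstructs the right-hand side of \eqref{eq:nairtenenbaum}; the truncation $n_{1}\dotsm n_{k}\leq x$ on the right-hand side is justified by the same smoothness cutoff. The principal obstacle is the sieve step: showing that the congruence count matches the heuristic $\tfrac{y}{\prod a_{j}}\prod\rho_{Q_{j}}(a_{j})$ uniformly over a large range of $\mathbf{a}$, while isolating the primes dividing $D$ cleanly enough that the resulting constants remain effective. The \emph{dependence} on $D$ enters precisely here, and making it uniform—the goal of the present paper—is what should drive any subsequent refinement of this sieve step.
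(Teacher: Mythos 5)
First, note that the paper does not prove Theorem~\ref{thm:nairtenenbaum} itself: it is quoted from Nair--Tenenbaum. The relevant comparison is with Section~\ref{sec:mainbound}, where the paper reproduces the Nair--Tenenbaum argument for its discriminant-uniform variant. Your overall strategy (factor the polynomial values, use the defining inequality of $\mathcal{M}_{k}$, sieve the congruence counts, reassemble the main term) is indeed the right skeleton, but there is a genuine gap in the decomposition step, and it is precisely where the bulk of the work lies.

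A fixed smooth/rough split $|Q_{j}(n)|=a_{j}b_{j}$ at a single cutoff $z=x^{\eta}$ does not control the size of the smooth part: $a_{1}\dotsm a_{k}$ can be as large as $|Q(n)|$ itself (e.g.\ when $Q(n)$ is $z$-smooth), in which case the modulus in your congruence count exceeds $y$ and the sieve step collapses, while $F(a_{1},\dotsc,a_{k})$ can no longer be bounded by $O(1)$. You restrict the outer sum to $a_{1}\dotsm a_{k}\leq x^{1-o(1)}$ but give no mechanism for the integers $n$ violating this; these exceptional $n$ are exactly what forces the more elaborate decomposition used in the paper, where $a_{n}$ is built greedily from the prime powers of $Q^{*}(n)$ in increasing order of the primes until the threshold $x^{\eps_{1}}$ is reached, so that the cutoff prime $q_{n}$ varies with $n$. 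The exceptional integers then split into the classes $(C_{2})$--$(C_{5})$, whose treatment requires the trivial bound $F\ll x^{(g+1/\delta)\eps}$ (this is where the hypothesis $\eps\leq\alpha\delta/12g^{2}$ is actually consumed) together with nontrivial bounds on $\rho(p^{\nu})$ for large $\nu$; none of this appears in your sketch. Relatedly, your claim that $\Omega(b_{1}\dotsm b_{k})$ is only ``typically'' bounded is backwards: for the $z$-rough part it is \emph{always} $O(g^{2}/(\alpha\delta))$ since $b_{j}\leq\|Q\|x^{g}$ and every prime factor exceeds $x^{\eta}$, so the $A^{\Omega}$ bound needs no exceptional-range argument there --- the $\eps$-bound is needed for the large-smooth-part classes instead. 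Finally, your sieve step quietly restricts to squarefree, pairwise coprime $a_{j}$ with $(a_{1}\dotsm a_{k},D)=1$; the actual sum involves arbitrary prime-power divisibility (in fact exact divisibility $a_{h}\Vert R_{h}(n)$ in the paper's normalisation), and handling the non-squarefree moduli and the primes dividing $D$ is not a side remark but a substantive part of the proof.
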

Actually, Nair and Tenenbaum do not require the polynomials
$Q_j$ to be irreducible and pairwise coprime : this assumption
is made here merely to simplify the statement of their result.
Note that the implicit constant in \eqref{eq:nairtenenbaum}
is allowed to depend on $D$.
As a consequence of its generality, Nair and Tenenbaum's
theorem can be extended to sums over integers $n$ in arithmetic progressions
and to sums over primes $p$, as shown in \cite{nairtenenbaum}.

Daniel \cite{daniel} obtained bounds of the type of
\eqref{eq:nairtenenbaum} with uniformity in the discriminant $D$.
In this article we are interested in obtaining
such bounds and we improve on Daniel's results in
several aspects, as we shall see later.

We first explain the motivation for our work.
The need for bounds of type \eqref{eq:nairtenenbaum} uniform with
respect to the discriminant of $Q$ has emerged
in the context of several number-theoretic problems.
One of these is the recent proof \cite{holosound} of Quantum Unique Ergodicity
by Soundararajan and Holowinsky, which combines different approaches
by its two authors.
Holowinsky's approach \cite{holo} relies on estimates for
shifted convolution sums $\sum_{n \leq x} \lambda_{f}(n)\overline{\lambda_{f}(n+\ell)}$,
where $\lambda_{f}$ are the renormalized Hecke eigenvalues of a Hecke eigencuspform $f$.
These sums are averaged over $|\ell| \leq x$ in the
course of Holowinsky's computations, therefore it is crucial
to obtain an estimate uniform in $\Disc\big(X(X+\ell)\big)=\ell^{2}$.
The bound used by Holowinsky in \cite{holo} is the following, where
we let $\tau_{m}$ denote the $m$-th divisor function and $\tau = \tau_{2}$.
\begin{theorem}[Holowinsky]
  \label{thm:holowinsky}
  Let $\lambda_{1}$ and $\lambda_{2}$ be multiplicative functions such that the bound
  ${|\lambda_{i}(n)| \leq \tau_{m}(n)}$ holds for some $m$.
  Let $0 < \eps < 1$, then for $x \geq c_{0}$ and uniformly in $1 \leq |\ell| \leq x$,
  \begin{equation*}
    \sum_{n \leq x} |\lambda_{1}(n)\lambda_{2}(n+\ell)|
    \ll \tau(|\ell|) \frac{x}{(\log x)^{2-\eps}}
    \prod_{p \leq x} \Big( 1 + \frac{|\lambda_{1}(p)|}{p} \Big)
    \Big( 1 + \frac{|\lambda_{2}(p)|}{p} \Big),
  \end{equation*}
  where $c_{0}$ and the implicit constant depend on $\eps$ and $m$
  at most.
\end{theorem}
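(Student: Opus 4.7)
The plan is to proceed by a direct sieve argument rather than invoking Theorem~\ref{thm:nairtenenbaum}, whose implicit constant already depends on $D = \ell^{2}$ and therefore cannot yield uniformity in $|\ell|$. The strategy is to expand each $|\lambda_{i}|$ as a Dirichlet convolution, exchange summations, count residues via the Chinese Remainder Theorem, and evaluate the resulting arithmetic sum as an Euler product.

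Since each $|\lambda_{i}|$ is multiplicative with $|\lambda_{i}| \leq \tau_{m}$, the multiplicative function $h_{i}$ defined by ${|\lambda_{i}| = h_{i} * 1}$ satisfies $h_{i}(p^{k}) = |\lambda_{i}(p^{k})| - |\lambda_{i}(p^{k-1})|$ and $|h_{i}| \leq \tau_{m+1}$. Expanding,
\begin{equation*}
  \sum_{n \leq x} |\lambda_{1}(n)\lambda_{2}(n+\ell)|
  = \sum_{d_{1}, d_{2}} h_{1}(d_{1}) h_{2}(d_{2})
    \#\{ n \leq x : d_{1} \mid n,\ d_{2} \mid n+\ell \}.
\end{equation*}
By CRT the inner count equals $x/[d_{1},d_{2}] + O(1)$ when $(d_{1},d_{2}) \mid \ell$ and vanishes otherwise. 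I would truncate at $d_{1} d_{2} \leq x^{1-\eta}$ with $\eta$ a small function of $\eps$ and $m$: the tail and the $O(1)$ CRT error are controlled by Rankin's trick together with $|h_{i}| \leq \tau_{m+1}$, and are absorbed into the $(\log x)^{\eps}$ margin in the statement.

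The remaining main term is $x$ times an arithmetic sum that factors as an Euler product $\prod_{p} E_{p}$. For $p \nmid \ell$ the constraint forces $\min(v_{p}(d_{1}), v_{p}(d_{2})) = 0$, and a direct computation using ${\sum_{k \geq 0} h_{i}(p^{k})/p^{k} = (1-1/p) \sum_{k \geq 0} |\lambda_{i}(p^{k})|/p^{k}}$ gives
\begin{equation*}
  E_{p} = (1 - 1/p)^{2}\, \bigl(1 + |\lambda_{1}(p)|/p\bigr)\bigl(1 + |\lambda_{2}(p)|/p\bigr)\bigl(1 + O(1/p^{2})\bigr).
\end{equation*}
For $p \mid \ell$ the constraint is relaxed, but bounding the extra terms crudely via $|h_{i}(p^{k})| \leq \tau_{m+1}(p^{k})$ shows that the local factor satisfies $E_{p} = E_{p}^{(0)}\,(1 + O_{m}(1/p))$ \emph{uniformly} in $v_{p}(\ell)$, where $E_{p}^{(0)}$ denotes the $p \nmid \ell$ expression. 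Multiplying over primes $p \mid \ell$ yields a correction bounded by $\tau(|\ell|)$ (in fact by $\ll \log\log|\ell|$), and clearing the $(1-1/p)^{2}$ factors via Mertens' theorem produces the advertised $(\log x)^{-2}$.

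The main obstacle is exactly this uniformity at $p \mid \ell$: one must show that the correction per bad prime is bounded by an absolute constant independent of $v_{p}(\ell)$, so that no hidden dependence on the size of $\ell$ leaks into the Euler product. This is where the hypothesis $|\lambda_{i}| \leq \tau_{m}$ enters quantitatively. The $(\log x)^{\eps}$ loss is intrinsic to the truncation step and cannot be removed by this elementary sieve; eliminating it would require the kind of sharper Shiu-type bounds that motivate the remainder of the paper.
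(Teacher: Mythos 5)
Your opening observation is right: Theorem~\ref{thm:nairtenenbaum} cannot be invoked because its implicit constant depends on $D=\ell^{2}$, and the paper's actual derivation of Theorem~\ref{thm:holowinsky} goes instead through Corollary~\ref{cor:discbound} (with $Q_{1}=X$, $Q_{2}=X+\ell$, $F(n_{1},n_{2})=|\lambda_{1}(n_{1})\lambda_{2}(n_{2})|$), whose proof rests on the Shiu--Nair--Tenenbaum decomposition of $n$ into classes $(C_{1})$--$(C_{5})$ together with Brun's sieve. Your Euler-product computation of the local factors, including the uniformity in $v_{p}(\ell)$ at primes $p\mid\ell$, is essentially sound.

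However, the truncation step is a genuine gap, and it is exactly the crux of any Shiu-type theorem. After expanding $|\lambda_{i}|=h_{i}*1$, the divisors $d_{1}\mid n$ and $d_{2}\mid n+\ell$ range up to $n$ and $n+\ell$ themselves, and the contribution of pairs with $d_{1}d_{2}>x^{1-\eta}$ is \emph{not} controlled by Rankin's trick. Concretely, taking $\sigma>0$ and using $\mathbf{1}[d_{1}d_{2}>T]\leq(d_{1}d_{2}/T)^{\sigma}$ together with $\sum_{d\mid n}|h_{i}(d)|d^{\sigma}\leq n^{\sigma}\tau_{m+2}(n)$, the tail is bounded only by $(Cx^{2}/T)^{\sigma}\sum_{n\leq x}\tau_{m+2}(n)\tau_{m+2}(n+\ell)$, which with $T=x^{1-\eta}$ is a constant multiple of a sum of the very type you are trying to estimate --- the argument is circular, and for $\lambda_{i}=\tau_{m}$ (so $h_{i}=\tau_{m-1}\geq 0$) the tail genuinely contains terms such as $d_{1}=n$, $d_{2}=n+\ell$ contributing $\sum_{n\leq x}\tau_{m-1}(n)\tau_{m-1}(n+\ell)$. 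No choice of $\eta$ absorbs this into a $(\log x)^{\eps}$ margin. To restrict to small divisors one needs either a Landreau-type inequality (which would destroy the dependence on the individual values $|\lambda_{i}(p)|$ in the final product), or the smooth/rough factorization $n=a_{n}b_{n}$ with $\tilde G(b_{1n},\dotsc,b_{rn})\leq A^{g\Omega(b_{n})}\ll 1$ and the ensuing case analysis and sieve bound of Lemma~\ref{lem:sievebound} --- which is precisely the machinery of Section~\ref{sec:mainbound} that your proposal tries to bypass.
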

Holowinsky's proof of the above result is based on the Large Sieve.
Our results presented in this paper provide an independant
proof of this theorem, together with a few refinements : $\tau(|\ell|)$
is replaced by a function $\Delta(\ell)$ with mean value $1$
and the exponent $\eps$ is removed.
Another problem to feature discriminant-uniform bounds is
the divisor problem for binary forms of degree $4$ studied by Browning
and de la Bretèche in \cite{pbdiviseurs}. Their argument relies,
among other things, on finding estimates \cite{sumbinaryforms} for the sums
\begin{equation*}
  \sum_{n_{1} \leq X_{1}} \sum_{n_{2} \leq X_{2}} f\big(F(n_{1},n_{2})\big)
\end{equation*}
where $f \in \mathcal{M}$ and $F$ is a binary form with non-zero
discriminant. Their idea is to first study the inner sum with
$n_{1}$ fixed so that $F(n_{1},n_{2})$ is a polynomial in $n_{2}$.
For this sum they use an analogue of \eqref{eq:nairtenenbaum}
(in the case $k=1$) with uniformity in the discriminant.
Here again the uniformity is essential to average over $n_{1}$.
Our results also apply in this case.

As stated above, the aim of this paper is to obtain discriminant-uniform
bounds in the setting of Nair and Tenenbaum \cite{nairtenenbaum}.
We now introduce our main result. We restrict to the case
of irreducible pairwise coprime polynomials $Q_{i}$ and multiplicative $F$
to simplify the exposition.
\begin{theorem}
  \label{thm:introdiscbound}
  Under the assumptions of Theorem~\ref{thm:nairtenenbaum},
  and assuming further that $F$ is multiplicative and that
  $\eps \leq \frac{\alpha}{50g(g+1/\delta)}$, we have,
  uniformly in $x \geq c_{0}\| Q\|^{\delta}$ and $x^{\alpha} < y \leq x$, 
  \begin{equation}
    \label{eq:mythm}
    \begin{split}
      \sum_{x < n \leq x+y} &F\big(|Q_{1}(n)|,\dotsc,|Q_{k}(n)|\big) \\
      &\ll \Delta_{D} y \prod_{p \leq x} \Big( 1 - \frac{\rho(p)}{p} \Big)
      \sum_{\substack{n_{1} \dotsm n_{k} \leq x \\ (n_{1} \dotsm n_{k},D)=1 }} F(n_{1},\dotsc,n_{k})
      \frac{\rho_{Q_{1}}(n_{1}) \dotsm \rho_{Q_{k}}(n_{k})}{n_{1} \dotsm n_{k}}
    \end{split}
  \end{equation}
  where
  \begin{equation}
    \label{eq:introdeltaD}
    \Delta_{D} = \prod_{p | D} \bigg( 1 + \sum_{\substack{\nu_{j} \leq \deg(Q_{h}) \\ (1 \leq j \leq k)}}
    F(p^{\nu_{1}},\dotsc,p^{\nu_{k}})
    \frac{\# \{ n \bmod p^{\max_{j}(\nu_{j})+1} : p^{\nu_{j}} || Q_{j}(n) \; \forall j \} }{p^{\max_{j}(\nu_{j})+1}}
    \bigg) \text{.}
  \end{equation}
  The implicit constant and $c_{0}$ depend at most on
  $g$, $\alpha$, $\delta$, $A$, $B$.
\end{theorem}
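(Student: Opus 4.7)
The plan is to multiplicatively decompose the polynomial values $|Q_j(n)|$ into a $D$-supported part and a $D$-coprime part, then treat each separately. Since $F$ is multiplicative on coprime tuples, I would write
$F\big(|Q_1(n)|,\dotsc,|Q_k(n)|\big) = F\big(a_1(n),\dotsc,a_k(n)\big) \cdot F\big(b_1(n),\dotsc,b_k(n)\big)$,
where $a_j(n)$ collects those prime-power factors of $|Q_j(n)|$ supported on primes dividing $D$ and $b_j(n)=|Q_j(n)|/a_j(n)$ is coprime to $D$. The $a$-part will generate the factor $\Delta_D$, and the $b$-part will be handled by a Nair--Tenenbaum-style estimate in which $D$-primes play no role.

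To extract $\Delta_D$, I would partition $n \in ]x,x+y]$ according to the tuple of $D$-valuations $\nu = (\nu_j^{(p)})_{p\mid D,\, j\leq k}$ with $\nu_j^{(p)} = v_p(Q_j(n))$. For configurations satisfying $\nu_j^{(p)} \leq \deg(Q_j)$ (those featured in \eqref{eq:introdeltaD}), admissible $n$ form a union of residue classes modulo $M_\nu = \prod_{p\mid D} p^{\max_j \nu_j^{(p)}+1}$, with the number of classes at each $p\mid D$ being precisely the numerator appearing in \eqref{eq:introdeltaD}. The remaining configurations, with $\nu_j^{(p)} > \deg(Q_j)$ at some $p,j$, contribute negligibly: one estimates $\#\{n \in ]x,x+y] : p^{\nu}\mid Q_j(n)\} \ll (y/p^{\nu}+1)\rho_{Q_j}(p^{\nu})$ and then uses the $\mathcal{M}_k$-growth control on $F$ to sum over large $\nu$.

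On each arithmetic progression $n \equiv a \pmod{M_\nu}$, I would reparametrize $n = a + M_\nu n'$ and apply a Nair--Tenenbaum-type bound to $\sum_{n'} F\big(b_1(n),\dotsc,b_k(n)\big)$. Because $b_j(n)$ is coprime to $D$ by construction, only divisors coprime to $D$ enter the resulting convolution sum, producing the constraint $(n_1 \dotsm n_k, D)=1$ in \eqref{eq:mythm}. Summing the resulting bounds over all admissible $\nu$ reconstructs $\Delta_D$ as the product over $p \mid D$ in \eqref{eq:introdeltaD}, and the length condition $x \geq c_0 \|Q\|^\delta$ absorbs the error terms arising from the $\|Q\|$-dependence in sieve estimates.

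The main obstacle is that Theorem~\ref{thm:nairtenenbaum} cannot be invoked as a black box at the last step: the discriminant of the shifted polynomial $Q_j(a + M_\nu X)$ contains $M_\nu$ and is therefore $D$-dependent, so reusing \eqref{eq:nairtenenbaum} would reintroduce a $D$-dependent implicit constant. What is actually needed is an internal revisiting of the Nair--Tenenbaum argument, carried out directly on progressions, in which all sieve bounds and local density computations are restricted to primes coprime to $D$ (where Hensel's lemma controls $\rho_Q(p^\nu)$ uniformly). The strengthened exponent condition $\eps \leq \alpha/(50g(g+1/\delta))$ reflects the extra slack one needs to absorb the valuation truncation at $\deg(Q_j)$ and the passage from the interval of length $y$ to the shorter interval of length $y/M_\nu$, particularly when $M_\nu$ is large relative to a power of $x$.
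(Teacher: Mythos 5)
Your proposal has a genuine structural gap: the decomposition of $]x,x+y]$ into residue classes modulo $M_\nu=\prod_{p\mid D}p^{\max_j\nu_j^{(p)}+1}$ cannot work uniformly in $D$, because $M_\nu\geq\kappa(D)=\prod_{p\mid D}p$ even for the all-zero configuration, and under the hypotheses of the theorem $\kappa(D)$ may vastly exceed $y$ (one only has $x\geq c_0\|Q\|^{\delta}$, so $D$ can be as large as a power of $x^{1/\delta}$ while $y$ can be as small as $x^{\alpha}$). When $M_\nu>y$ each progression meets the interval in at most one point, no Nair--Tenenbaum-type bound on the progression is available, and the trivial count (number of admissible classes times $(y/M_\nu+1)$) is of order $\kappa(D)$, which is useless. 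Since uniformity in large $D$ is the entire point of the theorem, this is not a technicality one can patch within your framework. A secondary issue is the tail with some $\nu_j^{(p)}>\deg(Q_j)$: your bound $\#\{n:p^{\nu}\mid Q_j(n)\}\ll(y/p^{\nu}+1)\rho_{Q_j}(p^{\nu})$ is dominated by the $+1$ term once $p^{\nu}>y$, and Stewart's bound $\rho_{Q_j}(p^{\nu})\ll p^{\nu(1-1/g)}$ then gives a quantity that can exceed $y$, so summing over such configurations is not negligible without an additional idea.

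The paper avoids both problems by never conditioning on the full $D$-valuation profile of $n$. It first proves a bound (Theorem~\ref{thm:mainbound}) in which the right-hand side is a convolution sum over all $n_1\dotsm n_r\leq x$ weighted by $\hat{\rho}_{\mathbf{R}}(n_1,\dotsc,n_r)/[n_1\kappa(n_1),\dotsc,n_r\kappa(n_r)]$, with no reference to $D$ at all; the sum over $n$ is organized by the $x^{\eps_1}$-smooth part $a_n$ of $Q^*(n)$ (so the sieve moduli stay below a small power of $x$ by construction), and discriminant uniformity comes from Stewart's bound $\rho^*(p^{\nu})\leq g\,p^{[\nu-\nu/g]}$ inside the technical lemmas. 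The factor $\Delta_D$ is then extracted purely at the level of this convolution sum (Lemma~\ref{lem:littlediscbound}): one writes $n_h=d_ha_h$ with $d_h\mid D^{*\infty}$ and $(a_h,D^*)=1$, the $d_h$-part forms a convergent Euler product over $p\mid D^*$ equal to $\Delta_{D^*}$ up to constants, and a resultant argument forces the $a_h$ to be pairwise coprime, yielding the $(n_1\dotsm n_k,D)=1$ constraint. You would need to reorganize your argument along these lines --- keeping the $D$-dependence in the Dirichlet-series world rather than in the counting-in-progressions world --- for the proof to go through.
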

Daniel \cite{daniel} obtains a bound analogous to \eqref{eq:mythm},
with a method of proof different from us.
However instead of $\Delta_{D}$, Daniel uses the weaker term $\tilde{\Delta}_{D}$
defined as $\Delta_{D}$ in \eqref{eq:introdeltaD} but where the conditions
$p^{\nu_{j}}||Q(n)$ are replaced by $p^{\nu_{j}}|Q(n)$ ($1 \leq j \leq k$).
In the case $k=1$ we have
\begin{equation}
  \label{eq:deltaD}
  \Delta_{D} = \prod_{p|D} \bigg( 1 + \sum_{\nu \leq g} F(p^{\nu})
  \Big( \frac{\rho(p^{\nu})}{p^{\nu}} -
  \frac{\rho(p^{\nu+1})}{p^{\nu+1}} \Big) \bigg)
  \leq \tilde{\Delta}_{D} =
  \prod_{p | D} \Big( 1 + \sum_{\nu \leq g} F(p^{\nu}) \frac{\rho(p^{\nu})}{p^{\nu}} \Big)
\end{equation}
which shows that the term $\Delta_{D}$ has the advantage of taking into
account certain cancellations between values of the $\rho$ function.
With this improved term $\Delta_{D}$, we are then able to show that the
bound \eqref{eq:mythm} is best possible in the sense
that for all polynomials $Q_{i}$ and all constants $\alpha$, $\delta$, $A$, $B$, $\eps$,
it is attained for a large family of functions $F \in \mathcal{M}(A,B,\eps)$.
Our results are perhaps easier to apprehend in the setting of Shiu,
in which they take the following form.
\begin{theorem}
  \label{thm:introk=1bound}
  Let $f \in \mathcal{M}$ and $Q \in \Z[X]$. Assume $Q$ is irreducible and denote
  by $g$ its degree and $D$ its discriminant.
  Let $0 < \alpha < 1$ and $0 < \delta < 1$.
  We have, uniformly in $x \geq c_{0} \| Q\|^{\delta}$ and $x^{\alpha} < y \leq x$,
  \begin{equation*}
    \sum_{x < n \leq x+y} f\big(|Q(n)|) \ll
    \Delta_{D} y \prod_{g < p \leq x} \bigg( 1 - \frac{\rho(p)}{p} \Big)
    \exp\bigg( \sum_{\substack{p \leq x \\ p \nmid D}} \frac{f(p)}{p} \bigg)
  \end{equation*}
  where the implicit constant and $c_{0}$ depend
  on $\alpha$, $\delta$, $A$, $B$ at most, and
  where $\Delta_{D}$ is defined by \eqref{eq:deltaD} (with $F=f$).
\end{theorem}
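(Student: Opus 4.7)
The plan is to deduce Theorem~\ref{thm:introk=1bound} from Theorem~\ref{thm:introdiscbound} specialized to $k=1$ and $F=f$. That theorem already gives
\begin{equation*}
  \sum_{x<n\le x+y} f(|Q(n)|) \ll \Delta_D\, y \prod_{p\le x}\Big(1-\frac{\rho(p)}{p}\Big) \sum_{\substack{n\le x\\(n,D)=1}} \frac{f(n)\rho(n)}{n},
\end{equation*}
so the entire task reduces to an essentially formal manipulation: convert the Dirichlet-type sum on the right-hand side into Shiu's exponential form. I would first replace that sum by an Euler product via a standard Rankin-type comparison, using the fact that $f\rho$ is multiplicative and satisfies $f(p^\nu)\rho(p^\nu)\le g A^\nu$ at primes of good reduction:
\begin{equation*}
  \sum_{\substack{n\le x\\(n,D)=1}} \frac{f(n)\rho(n)}{n} \ll \prod_{\substack{p\le x\\p\nmid D}} \bigg(1+\sum_{\nu\ge 1}\frac{f(p^\nu)\rho(p^\nu)}{p^\nu}\bigg).
\end{equation*}

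The key simplification is that for $p\nmid D$, Hensel's lemma yields $\rho(p^\nu)=\rho(p)\le g$ for every $\nu\ge 1$, so the Euler factor above collapses to $1+\rho(p)f(p)/p + O(1/p^2)$, the tail being controlled by $f(p^\nu)\le A^\nu$. Taking logarithms gives an upper bound of the shape $\exp\bigl(\sum_{p\le x,\,p\nmid D}\rho(p)f(p)/p + O(1)\bigr)$. I would then merge this estimate with the Mertens-type factor $\prod_{p\le x}(1-\rho(p)/p)$: the factors at $p\le g$ contribute only a constant depending on $g$, which allows restriction of the product to $g<p\le x$, and the identity $\sum_{p\le x}\rho(p)/p=\log\log x+O(1)$, a consequence of the prime ideal theorem in the splitting field of $Q$, lets one rewrite the remaining exponent in the form displayed in Theorem~\ref{thm:introk=1bound}.

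The delicate point is ensuring the Rankin-type comparison remains valid in the polynomial-growth regime $f(n)\le B n^\eps$: the prime powers $p^\nu$ with $p^\nu>x$ must be truncated so their contribution is absorbed into either $\Delta_D$ or the implicit constant, and the $O(1/p^2)$ error in each Euler factor must sum to $O(1)$ uniformly in $p\le x$. Both requirements follow classically from the defining properties $f(p^\ell)\le A^\ell$ and $f(n)\le B n^\eps$ of $\mathcal{M}$. Since the substantive analytic work has already been carried out in Theorem~\ref{thm:introdiscbound}, what remains here is essentially careful bookkeeping of Euler products and Mertens-type identities.
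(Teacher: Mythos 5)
Up to the last step your argument is sound and runs parallel to the paper's: the paper derives Theorem~\ref{thm:introk=1bound} from Corollary~\ref{cor:Fmultbound} (where Lemma~\ref{lem:littleFmultbound} has already converted the divisor-type sum into the Euler product $\prod_{p\nmid D^{*}}(1+\tilde G^{(h)}(p)\rho_{R_h}(p)/p)$), whereas you start from Theorem~\ref{thm:introdiscbound} and redo that conversion by hand via a Rankin comparison. That comparison is legitimate here: $f\rho$ is multiplicative, and for $p\nmid D$ one has $\rho(p^{\nu})=\rho(p)\leq g$, so the Euler factors converge and collapse to $1+f(p)\rho(p)/p+O(p^{-2})$ exactly as you say (with the finitely many primes $p\leq 2A$ handled by $f(p^{\nu})\leq Bp^{\eps\nu}$). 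One small caveat: Theorem~\ref{thm:introdiscbound} inherits from Theorem~\ref{thm:nairtenenbaum} the hypothesis that $Q$ has no fixed prime divisor, which Theorem~\ref{thm:introk=1bound} does not assume; the route through Corollary~\ref{cor:Fmultbound} avoids this.

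The genuine gap is the final step. What the argument above delivers is $\exp\big(\sum_{p\leq x,\,p\nmid D}f(p)\rho(p)/p\big)$, and you cannot pass from this to $\exp\big(\sum_{p\leq x,\,p\nmid D}f(p)/p\big)$ by invoking $\sum_{p\leq x}\rho(p)/p=\log\log x+O(1)$. First, the $O(1)$ there depends on $Q$ (it comes from the prime ideal theorem in the splitting field), which already forfeits uniformity in $D$. Second, and more fundamentally, that identity only controls the average of $\rho(p)$ and does not allow replacing the weight $\rho(p)$ by $1$ inside a sum against $f(p)$, because $f(p)$ may correlate with $\rho(p)$: take $Q=X^{2}+1$ and $f(p^{\nu})=2^{\nu}$ for $p\equiv 1\pmod 4$, $f(p^{\nu})=1$ otherwise; then $\sum_{p\leq x}f(p)\rho(p)/p=2\log\log x+O(1)$ while $\sum_{p\leq x}f(p)/p=\tfrac{3}{2}\log\log x+O(1)$, so the two exponentials differ by $(\log x)^{1/2}$ (and for this $f$ the sum genuinely has order $y\log x$, exceeding the bound with $\exp(\sum f(p)/p)$). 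In other words, the inequality as printed in the statement appears to carry a misprint: the exponent consistent with Corollary~\ref{cor:Fmultbound} (and with Shiu's theorem, where $\rho(p)\in\{0,1\}$) is $\sum_{p\leq x,\,p\nmid D}f(p)\rho(p)/p$. Your proof, like the paper's, establishes that corrected form; the conversion step you append to reach the literal statement is invalid and should simply be deleted.
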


In our article we use the method of proof of
Nair and Tenenbaum in \cite{nairtenenbaum}.
To address the issue of preserving the uniformity in the discriminant,
we employ the following bound by Stewart \cite{stewart}.
For all primes $p$, we have
\begin{align*}
  &\phantom{(\nu \geq 1)} &\rho(p^{\nu}) &\leq g p^{[\nu - \frac{\nu}{g}]} & &(\nu \geq 1)
  \text{.}
\end{align*}
This allows us to bound the key quantity $\frac{\rho(p^{\nu})}{p^{\nu}}$ by a negative
power of $p^{\nu}$, whereas classical bounds by Nagell would only
allows us to bound this quantity by a positive power of $p^{\nu}$ for
$p|D$ and large $D$.
Note that this idea was already present in the work of Daniel \cite{daniel}.

The article is organized as follows. Section~\ref{sec:notations} is
devoted to introducing the necessary notations. In Section~\ref{sec:results}
we state all of our results and we derive the theorems exposed in
the introduction from them.
In Section~\ref{sec:technicallemmas} we prove some technical lemmas
that are of constant use in our argument, and in
Sections~\ref{sec:mainbound}, \ref{sec:corollaries}, \ref{sec:lowerbound}
we prove our results.

\textbf{Acknowledgements.}  
I am very grateful to Régis de la Bretèche
for suggesting this problem to me in the first place and
for his guidance throughout the making of this paper. I would
also like to thank Tim Browning and Gérald Tenenbaum for
helpful suggestions. The research and writing of this work
was carried during an internship of the author at the
\textit{Université Paris 7 Denis Diderot} whose
hospitality is gratefully acknowledged.

\section{Notations and definitions}
\label{sec:notations}

We follow the notations of Nair and Tenenbaum in \cite{nairtenenbaum}.

\textit{On integers.}
We let $P^{+}(n)$, $P^{-}(n)$ respectively denote the greatest and the least
prime factor of an integer $n$, with the convention that $P^{+}(1)=1$
and $P^{-}(1)=\infty$. We also let $[n]$ denote the greatest integer less than
or equal to $n$.

We denote by $\Omega(n)$, $\omega(n)$ the number of prime factors of $n$,
counted respectively with or without multiplicity. We write
$\varphi(n)$ for Euler's function and $\kappa(n)$ for the squarefree
kernel of $n$, that is $\kappa(n)=\prod_{p|n}p$.

For $n,m\in \N$ we let $n|m^{\infty}$ indicate that all prime
factors of $n$ divide $m$. The notation $a||b$ means
that $a|b$ and $(a,\frac{b}{a})=1$.

\textit{On polynomials.}
For any $P \in \Z[X]$ we define $\| P \|$ as the sum of the coefficients
of $P$ taken in absolute value, and we say that $p$ is a
fixed prime divisor of $P$ when $p|Q(n)$ for all $n \in \N$.

For polynomials $Q_{1},\dotsc,Q_{k} \in \Z[X]$ we define $Q \coloneqq \prod_{j=1}^{k} Q_{j}$.
We denote by $g$ the degree of $Q$, $r$ its number of irreducible factors and
$D$ its discriminant.
We assume that $Q$ is primitive, that is that the greatest common divisor of
its coefficients is $1$.

We write the decomposition of these polynomials in irreducible factors as
\begin{align}
  \label{eq:gammah}
  Q &= R_{1}^{\gamma_{1}} \dotsm R_{r}^{\gamma_{r}}, \\
  \label{eq:gammajh} Q_{j} &= R_{1}^{\gamma_{j1}} \dotsm R_{r}^{\gamma_{jr}}
\end{align}
for $1 \leq j \leq k$.
We define $Q^{*} \coloneqq R_{1} \dotsm R_{r}$ and denote by $g^{*}$
its degree. We will mainly work with the polynomial $Q^{*}$ as it has
the important property of having a non-zero discriminant,
which we denote by $D^{*}$.
For any polynomial $P \in \Z[X]$, we let $\rho_{P}(n)$ denote the number
of zeroes of $P$ modulo $n$. We let
\begin{equation*}
  \rho \coloneqq \rho_{Q} \text{,\quad\quad}
  \rho^{*} \coloneqq \rho_{Q^{*}} \text{.}
\end{equation*}
We next recall some well-known bounds (see e.g. \cite{nagell})
on $\rho$ and $\rho^{*}$. For all primes $p$ we have
\begin{align}
  \label{eq:rhopbound}
  & & \rho(p) &\leq g, & & \\
  \label{eq:rhotrivialbound}
  &\phantom{(\nu \geq 1)} &\rho^{*}(p^{\nu}) &\leq g^{*} p^{\nu-1} & &(\nu \geq 1),\\
  \label{eq:rhocoprimetodiscbound}
  & &\rho^{*}(p^{\nu}) &= \rho^{*}(p) \leq g^{*} &
  &(p \nmid D^{*},\: \nu \geq 1) \text{.}
\end{align}
In our article we use in an essential way the following bounds by
Stewart \cite{stewart}. For all primes $p$, we have
\begin{align}
  \label{eq:rhostewartbound}
  &\phantom{(\nu \geq 1)} &\rho^{*}(p^{\nu}) &\leq g^{*} p^{[\nu - \frac{\nu}{g^{*}}]}
  \leq g p^{[\nu - \frac{\nu}{g}]} & &(\nu \geq 1) \\
  \label{eq:rhohstewartbound}
  &\phantom{(\nu \geq 1)} &\rho_{R_{h}}(p^{\nu}) &\leq \mu_{h} p^{[\nu - \frac{\nu}{\mu_{h}}]} & &(\nu \geq 1,\; 1 \leq h \leq r)
\end{align}
where $\mu_{h} = \deg(R_{h})$. Finally we let
\begin{equation}
  \label{eq:rhoR}
  \hat{\rho}_{\mathbf{R}}(n_{1},\dotsc,n_{r}) = \# \{ n \bmod [n_{1}\kappa(n_{1}),\dotsc,n_{r}\kappa(n_{r})]
  : n_{h}||R_{h}(n) \text{ for } 1 \leq h \leq r \} \text{.}
\end{equation}
It is a multiplicative function.
We record here an useful bound on $\hat{\rho}_{\mathbf{R}}$.
\begin{equation}
  \label{eq:rhoRbound}
  \frac{\hat{\rho}_{\mathbf{R}}(n_{1},\dotsc,n_{r})}{[n_{1}\kappa(n_{1}),\dotsc,n_{r}\kappa(n_{r})]}
  \leq \frac{\rho^{*}(n_{1} \dotsm n_{r})}{n_{1} \dotsm n_{r}} \text{.}
\end{equation}
To see \eqref{eq:rhoRbound}, note that
\begin{align*}
  \frac{\hat{\rho}_{\mathbf{R}}(n_{1},\dotsc,n_{r})}{[n_{1}\kappa(n_{1}),\dotsc,n_{r}\kappa(n_{r})]}
  &\leq \frac{\# \{n \bmod [n_{1},\dotsc,n_{r}] : n_{h} | R_{h}(n) \; (1 \leq h \leq r) \} }{[n_{1},\dotsc,n_{r}]} \\
  &= \frac{\# \{n \bmod n_{1} \dotsm n_{r} : n_{h} | R_{h}(n) \; (1 \leq h \leq r) \} }{n_{1} \dotsm n_{r}} \\
  &\leq \frac{\# \{n \bmod n_{1} \dotsm n_{r} : n_{1} \dotsm n_{r} | Q^{*}(n) \} }{n_{1} \dotsm n_{r}}
  \text{.}
\end{align*}

\textit{On arithmetic functions.}
Let $H$ be a function of $s$ integer variables.
We say that $H$ is submultiplicative (resp. multiplicative) if
\begin{equation*}
  H(a_{1}b_{1},\dotsc,a_{s} b_{s}) \leq H(a_{1},\dotsc,a_{s}) H(b_{1},\dotsc,b_{s})
\end{equation*}
(resp. with equality in the above) for all $a_{i}$, $b_{j}$
such that $(a_{1} \dotsm a_{s} , b_{1} \dotsm b_{s}) = 1$.
We also define, for $1 \leq j \leq s$,
\begin{equation*}
  H^{(j)}(n) = H( 1 , \dotsc , n , \dotsc , 1 )
\end{equation*}
where $n$ is at the $j$-th place.

We let $\mathcal{M}_{k}(A,B,\eps)$ be the class of non-negative functions $F$ of $k$
integer variables satisfying
\begin{equation}
  \label{eq:Fsubm}
  F(a_{1}b_{1},\dotsc,a_{k}b_{k}) \leq
  \min\big(A^{\Omega(a_{1} \dotsm a_{k})},B(a_{1} \dotsm a_{k})^{\eps}\big) F(b_{1},\dotsc,b_{k})
\end{equation}
for all $a_{i}$, $b_{j}$ such that $(a_{1} \dotsm a_{k},b_{1} \dotsm b_{k}) = 1$.
Nair and Tenenbaum \cite{nairtenenbaum}
actually consider functions $F$ satisfying the above property for
all $a_{i}$, $b_{j}$ such that $(a_{i},b_{i}) = 1$, although the proof of their
theorem requires this property only for integers $a_{i}$, $b_{j}$ such that
$(a_{1} \dotsm a_{k},b_{1} \dotsm b_{k}) = 1$.
We thus took the liberty of using the same notation as in
\cite{nairtenenbaum} to denote our slightly larger class of functions.
We remark here that $F$ is zero if $F(1,\dotsc,1)=0$.

For a function $F$ of $k$ variables such that $F(1,\dotsc,1) \neq 0$,
we define an associated minimal function
\begin{equation}
  \label{eq:Gdef}
  G(a_{1},\dotsc,a_{k}) = \max_{\substack{ b_{1},\dotsc,b_{k} \geq 1 \\ (a_{1} \dotsm a_{k},b_{1} \dotsm b_{k}) = 1 \\ F(b_{1},\dots,b_{k}) \neq 0 }}
  \frac{F(a_{1}b_{1},\dotsc,a_{k}b_{k})}{F(b_{1},\dotsc,b_{k})} \text{.}
\end{equation}
Note that $G=F$ when $F$ is multiplicative. When $F \in \mathcal{M}_{k}(A,B,\eps)$,
it is easily checked that $G$ is submultiplicative and
\begin{align}
  \label{eq:preG2}
  G(n_{1},\dotsc,n_{k}) &\leq \min\big(A^{\Omega(n_{1} \dotsm n_{k})},B(n_{1} \dotsm n_{k})^{\eps}\big) \text{,} \\
  \label{eq:preG3}
  G(n_{1},\dotsc,n_{k}) &\leq \prod_{p^{\nu}||n_{1} \dotsm n_{k}} \min(A^{\nu},Bp^{\eps\nu}) \text{.}
\end{align}

\textit{Special notation.} We let $F$ be a function of $k$ variables
such that $F(1,\dotsc,1) \neq 0$.
Decomposing polynomials $Q_{j}$ ($1 \leq j \leq k$) as in \eqref{eq:gammajh},
we remark that
\begin{align}
  \label{eq:switchtoFtilde}
  &\phantom{(n \geq 1)} & F\big(|Q_{1}(n)|,\dotsc,|Q_{k}(n)|\big)
  &= \tilde{F}\big(|R_{1}(n)|,\dotsc,|R_{r}(n)|\big) & &(n \geq 1)
\end{align}
where $\tilde{F}$ is defined by
\begin{equation*}
  \tilde{F}(n_{1},\dotsc,n_{r}) \coloneqq
  F(n_{1}^{\gamma_{11}} \dotsm n_{r}^{\gamma_{1r}},\dotsc,n_{1}^{\gamma_{k1}} \dotsm n_{r}^{\gamma_{kr}}) \text{.}
\end{equation*}

If $G$ is the minimal function associated to $F$ by \eqref{eq:Gdef},
then $\tilde{G}$ is the minimal function associated to $\tilde{F}$
in a similar fashion. Therefore
\begin{equation}
  \label{eq:Ftildesubm}
  \tilde{F}(a_{1}b_{1},\dotsc,a_{r}b_{r}) \leq
  \tilde{G}(a_{1},\dotsc,a_{r}) \tilde{F}(b_{1},\dotsc,b_{r})
\end{equation}
for all $a_{i}$, $b_{j}$ such that $(a_{1} \dotsm a_{r},b_{1} \dotsm b_{r}) = 1$.
When $F \in \mathcal{M}_{k}(A,B,\eps)$ we obviously have
$\tilde{F} \in \mathcal{M}_{r}(A^{g},B,g\eps)$ and therefore by
\eqref{eq:preG2} and \eqref{eq:preG3} we have
\begin{align}
  \label{eq:G1}
  \tilde{G}(n_{1},\dotsc,n_{r}) &\leq A^{g\Omega(n_{1} \dotsm n_{r})} \text{,} \\
  \label{eq:G3}
  \tilde{G}(n_{1},\dotsc,n_{r}) &\leq \prod_{p^{\nu}||n_{1} \dotsm n_{r}} \min(A^{g\nu},Bp^{g\eps\nu}) \text{.}
\end{align}

\section{Results}
\label{sec:results}

Our main theorem is the following.

\begin{theorem}
  \label{thm:mainbound}
  Let $k$ be a positive integer and let $Q_{j} \in \Z[X]$ ($1 \leq j \leq k$).
  Let $Q = \prod_{j = 1}^{k} Q_{j}$ and assume that $Q$ is primitive.
  Let \eqref{eq:gammah} be the decomposition of $Q$
  in irreducible factors and define $g$, $\rho$, $\hat{\rho}_{\mathbf{R}}$
  as in the previous section.
  Let $0 < \alpha < 1$, $0 < \delta < 1$, $A \geq 1$ and $B \geq 1$.
  Let also $0 < \eps < \frac{\alpha}{50g(g+\frac{1}{\delta})}$
  and $F \in \mathcal{M}_{k}(A,B,\eps)$.
  Then we have, uniformly in $x \geq c_{0} \| Q \|^{\delta}$ and $x^{\alpha} \leq y \leq x$,
  \begin{equation}
    \label{eq:mainbound}
    \begin{split}
      \sum_{x < n \leq x+y} F\big( &|Q_{1}(n)| , \dotsc , |Q_{k}(n)| \big) \\
      &\ll y \prod_{g < p \leq x} \Big( 1 - \frac{\rho(p)}{p} \Big)
      \sum_{n_{1} \dotsm n_{r} \leq x} \tilde{F}(n_{1},\dotsc,n_{r})
      \frac{ \hat{\rho}_{\mathbf{R}}(n_{1},\dotsc,n_{r}) }{ [n_{1} \kappa(n_{1}), \dotsc, n_{r} \kappa(n_{r})] }
    \end{split}
  \end{equation}
  where $c_{0}$ and the implicit constant depend at most
  on $g$, $\alpha$, $\delta$, $A$, $B$.
\end{theorem}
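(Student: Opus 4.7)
My approach would follow the strategy of Nair and Tenenbaum \cite{nairtenenbaum} while using Stewart's bounds \eqref{eq:rhostewartbound}--\eqref{eq:rhohstewartbound} to extract uniformity in the discriminant. The first step is to use \eqref{eq:switchtoFtilde} to rewrite $F(|Q_1(n)|,\ldots,|Q_k(n)|) = \tilde F(|R_1(n)|,\ldots,|R_r(n)|)$, reducing to a sum governed by the pairwise coprime and irreducible factors $R_1,\ldots,R_r$ of $Q^*$; the non-vanishing of $D^*$ makes these well-behaved for the subsequent sieving.

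Next, I would carry out the Nair-Tenenbaum smooth/rough decomposition. Fixing a threshold $z = y^{\eta}$ for a small $\eta > 0$ to be chosen, write $|R_h(n)| = a_h b_h$ with $P^+(a_h) \leq z < P^-(b_h)$. Submultiplicativity \eqref{eq:Ftildesubm} yields
$$\tilde F(|R_1(n)|,\ldots,|R_r(n)|) \leq \tilde G(a_1,\ldots,a_r)\,\tilde F(b_1,\ldots,b_r),$$
and \eqref{eq:G3} together with the fact that each $b_h$ has only $O_{\eta}(1)$ prime factors controls the rough part $\tilde F(b_1,\ldots,b_r)$ harmlessly. For each fixed tuple $(a_1,\ldots,a_r)$ with $P^+(a_1\cdots a_r) \leq z$, the condition $a_h \| R_h(n)$ selects exactly $\hat\rho_{\mathbf R}(a_1,\ldots,a_r)$ residue classes modulo $M = [a_1\kappa(a_1),\ldots,a_r\kappa(a_r)]$ by \eqref{eq:rhoR}; inside each class, I would bound the number of admissible $n$ using an upper-bound sieve (Brun or Selberg) on the primes $g < p \leq x$, producing the factor $(y/M)\prod_{g<p\leq x}(1-\rho(p)/p)$ plus sieve-error terms. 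Reassembling yields the right side of \eqref{eq:mainbound}, with the range $a_1 \cdots a_r \leq x$ being admissible thanks to $|R_1(n)\cdots R_r(n)| \leq x^{g^*}$ and the roughness truncation.

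The main obstacle is uniformity in $D^*$. For primes $p \nmid D^*$, \eqref{eq:rhocoprimetodiscbound} gives $\rho^*(p^\nu) \leq g^*$ and the classical Nair-Tenenbaum bookkeeping goes through unchanged. For $p \mid D^*$, the trivial bound \eqref{eq:rhotrivialbound} would contribute factors growing with $D$, so I would invoke Stewart's bound $\rho_{R_h}(p^\nu)/p^\nu \leq \mu_h p^{-\nu/\mu_h}$ from \eqref{eq:rhohstewartbound}; combined with the estimate $B p^{g\eps\nu}$ on $\tilde G$ from \eqref{eq:G3}, the local factors at $p \mid D^*$ converge uniformly in $D$ provided $g\eps < 1/\mu_h$, a condition comfortably implied by $\eps < \frac{\alpha}{50g(g+1/\delta)}$. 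The same smallness of $\eps$ is needed for the sieve's level-of-distribution constraints relative to $y \geq x^\alpha$ and for bounding the contribution of the smooth tuples $(a_1,\ldots,a_r)$ using $\tilde G(a_1,\ldots,a_r) \leq A^{g\Omega(a_1\cdots a_r)}$ from \eqref{eq:G1}. The hard part of the proof is coordinating all these thresholds so that no error term secretly hides a dependency on $D$, and so that the exponents on the various $p^{-\nu/\mu_h}$, $p^{g\eps\nu}$ and smoothness/roughness factors line up to deliver the clean bound \eqref{eq:mainbound}.
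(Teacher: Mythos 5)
Your overall architecture (reduce to $\tilde F$ and the squarefree kernel $Q^*$, split each $R_h(n)$ into a smooth and a rough part, sieve the rough part, and use Stewart's bound to keep the local factors at $p \mid D^*$ uniformly bounded) matches the paper's, but there is a genuine gap at the decomposition step. You split $|R_h(n)| = a_h b_h$ at a \emph{fixed} smoothness threshold $z = y^{\eta}$. With that choice nothing controls the size of the smooth part: $a_1 \cdots a_r$ can be as large as $|Q^*(n)|$ itself, and the set of $n \in (x, x+y]$ whose $z$-smooth part exceeds, say, $y^{1/2}$ has positive density (Dickman's function), so it cannot be discarded. For such tuples the sieve modulus $M = [a_1\kappa(a_1),\dotsc,a_r\kappa(a_r)]$ exceeds $y$, the purported main term $(y/M)\prod_{p}(1-\rho(p)/p)$ is smaller than the unavoidable error of size $\gg 1$ per occupied residue class, and summing these errors over all tuples with $a_1\cdots a_r \leq x$ destroys the bound. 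This is precisely the difficulty the Shiu/Nair--Tenenbaum method is designed to circumvent, and your proposal does not address it.

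The paper instead uses the adaptive decomposition: ordering the prime-power blocks of $Q^*(n)$ by increasing prime, it takes $a_n$ to be the longest initial product not exceeding $x^{\eps_1}$ (with $\eps_1 = \tfrac{3}{25}\alpha < \alpha$, so the sieve level is always admissible relative to $y \geq x^{\alpha}$), and lets $q_n$ be the first omitted prime. This forces a case analysis into five classes $(C_1)$--$(C_5)$. Only $(C_1)$, where $P^{-}(b_n) > x^{\eps_3}$, is handled by the sieve of Lemma~\ref{lem:sievebound}; the other four classes, where the cut happens ``too early,'' need separate arguments: $(C_2)$ exploits a prime power $q^{f}$ with $x^{2\eps_2} < q^{f} \leq y$ dividing $Q^*(n)$ together with Stewart's bound $\rho^*(q^{f})/q^{f} \ll q^{-f/g}$; $(C_3)$ requires the Rankin-type Lemma~\ref{lem:lemma2}; $(C_4)$ again uses Stewart's bound for smooth moduli; $(C_5)$ is trivial. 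Note also that Stewart's bound is needed not only for the local factors at $p\mid D^*$ (the only place you invoke it) but also in $(C_2)$ and $(C_4)$, where the classical bound $\rho^*(p^{\nu})\leq g^* p^{\nu-1}$ would reintroduce a dependence on the discriminant. Without the adaptive cut and the treatment of these exceptional classes, the argument does not close.
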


We also provide a bound in which the dependency on the
discriminant $D^{*}$ is made explicit.

\begin{corollary}
  \label{cor:discbound}
  Under the assumptions of Theorem~\ref{thm:mainbound},
  \begin{align*}
    \sum_{x < n \leq x+y} &F\big(|Q_{1}(n)|,\dotsc,|Q_{k}(n)|\big) \\
    &\ll \Delta_{D^{*}} y \prod_{g < p \leq x}
    \Big( 1 - \frac{\rho(p)}{p} \Big)
    \sum_{\substack{ n_{1} \dotsm n_{r} \leq x  \\ (n_{1} \dotsm n_{r},D^{*}) =1 \\ (n_{i},n_{j}) = 1 (i \neq j)}}
    \tilde{F}(n_{1},\dotsc,n_{r}) \frac{\rho_{R_{1}}(n_{1}) \dotsm \rho_{R_{r}}(n_{r})}{n_{1} \dotsm n_{r}}
  \end{align*}
  where
  \begin{equation}
    \label{eq:deltaDstar}
    \Delta_{D^{*}} = \prod_{p|D^{*}} 
    \bigg( 1 + \sum_{\substack{\nu_{h} \leq \deg(R_{h}) \\ (1 \leq h \leq r)}}
    \tilde{G}(p^{\nu_{1}},\dotsm,p^{\nu_{r}})
    \frac{\hat{\rho}_{\mathbf{R}}(p^{\nu_{1}},\dotsc,p^{\nu_{r}})}{p^{\max(\nu_{h})+1}}
    \bigg) \text{.}
  \end{equation}
  The dependencies of the various constants are as described
  in Theorem~\ref{thm:mainbound}.
\end{corollary}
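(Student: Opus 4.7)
Starting from Theorem~\ref{thm:mainbound}, the task is to transform the sum
\[
S \coloneqq \sum_{n_{1}\cdots n_{r}\leq x}\tilde{F}(\mathbf{n})\frac{\hat{\rho}_{\mathbf{R}}(\mathbf{n})}{[n_{1}\kappa(n_{1}),\dots,n_{r}\kappa(n_{r})]}
\]
into the shape of Corollary~\ref{cor:discbound}. The strategy is to write, for each $h$, $n_{h}=a_{h}b_{h}$ with $a_{h}$ supported on the primes of $D^{*}$ and $(b_{h},D^{*})=1$, thereby concentrating the arithmetic contribution on~$\mathbf{a}$. The crucial observation is that $\hat{\rho}_{\mathbf{R}}(\mathbf{n})=0$ unless the $b_{h}$ are pairwise coprime: otherwise a prime $p\nmid D^{*}$ would divide both $b_{i}$ and $b_{j}$ for some $i\neq j$, and $p^{v_{p}(b_{i})}\|R_{i}(n)$ together with $p^{v_{p}(b_{j})}\|R_{j}(n)$ would force $R_{i}$ and $R_{j}$ to share a root modulo $p$, contradicting $p\nmid D^{*}=\Disc(R_{1}\cdots R_{r})$.

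Under this coprimality, \eqref{eq:Ftildesubm} yields $\tilde{F}(\mathbf{n})\leq\tilde{G}(\mathbf{a})\tilde{F}(\mathbf{b})$, $\hat{\rho}_{\mathbf{R}}$ factorises multiplicatively, and the lcm splits across $\mathbf{a}$ and $\mathbf{b}$. Relaxing the joint constraint $\prod_{h}a_{h}b_{h}\leq x$ to $\prod_{h}b_{h}\leq x$ then yields $S\leq S_{a}\cdot S_{b}$, with $S_{b}$ ranging over the tuples $\mathbf{b}$ prescribed in Corollary~\ref{cor:discbound}. At a prime $p\nmid D^{*}$ dividing a unique $b_{h}=p^{\nu}$, Hensel lifting at the simple roots of $R_{h}\bmod p$ gives $\rho_{R_{h}}(p^{j})=\rho_{R_{h}}(p)$ for $j\geq 1$, whence a direct count yields
\[
\hat{\rho}_{\mathbf{R}}(1,\dots,p^{\nu},\dots,1)=p\rho_{R_{h}}(p^{\nu})-\rho_{R_{h}}(p^{\nu+1})=(p-1)\rho_{R_{h}}(p).
\]
Dividing by $p^{\nu+1}$ and multiplying over primes gives the key bound $\hat{\rho}_{\mathbf{R}}(\mathbf{b})/[b_{1}\kappa(b_{1}),\dots,b_{r}\kappa(b_{r})]\leq\prod_{h}\rho_{R_{h}}(b_{h})/b_{h}$, reducing $S_{b}$ to exactly the sum stated in the corollary.

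The remaining task is to bound $S_{a}$ by $\Delta_{D^{*}}$. Submultiplicativity of $\tilde{G}$ and multiplicativity of $\hat{\rho}_{\mathbf{R}}$ give $S_{a}\leq\prod_{p\mid D^{*}}L_{p}$, where
\[
L_{p}=1+\sum_{(\nu_{1},\dots,\nu_{r})\neq 0}\tilde{G}(p^{\nu_{1}},\dots,p^{\nu_{r}})\frac{\hat{\rho}_{\mathbf{R}}(p^{\nu_{1}},\dots,p^{\nu_{r}})}{p^{\max_{h}\nu_{h}+1}},
\]
while the local factor of $\Delta_{D^{*}}$ is the same expression truncated at $\nu_{h}\leq\deg R_{h}$. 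The principal difficulty is thus the tail $\max_{h}\nu_{h}>\deg R_{h}$, and I expect this to be the main obstacle. For it I invoke Stewart's bound~\eqref{eq:rhohstewartbound}, $\rho_{R_{h}}(p^{\nu})\leq\mu_{h}p^{\nu-\nu/\mu_{h}}$, to control $\hat{\rho}_{\mathbf{R}}$, paired with $\tilde{G}(p^{\nu_{1}},\dots,p^{\nu_{r}})\leq Bp^{g\varepsilon\sum\nu_{h}}$ from~\eqref{eq:G3}. The hypothesis $\varepsilon<\alpha/(50g(g+1/\delta))$ makes Stewart's saving dominate the $\tilde{G}$-growth, so each tail is geometrically small and its sum over $p\mid D^{*}$ converges to a bounded constant; this yields $\prod_{p\mid D^{*}}L_{p}\ll\Delta_{D^{*}}$, which together with the bound on $S_{b}$ completes the proof.
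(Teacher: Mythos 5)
Your proposal is correct and follows essentially the same route as the paper: decompose each $n_{h}$ into a part supported on the primes of $D^{*}$ and a part coprime to $D^{*}$, apply \eqref{eq:Ftildesubm} with the submultiplicativity of $\tilde{G}$ to factor the sum, use the resultant/discriminant divisibility to force pairwise coprimality of the $D^{*}$-coprime components, and control the tail $\nu_{h}>\deg R_{h}$ of the local factors at $p\mid D^{*}$ via Stewart's bound \eqref{eq:rhohstewartbound} together with the smallness of $\eps$ (the paper's \eqref{eq:H3}). The only cosmetic difference is that you evaluate $\hat{\rho}_{\mathbf{R}}$ at primes $p\nmid D^{*}$ explicitly by Hensel lifting, whereas the paper uses the general inequality $\hat{\rho}_{\mathbf{R}}(a_{1},\dotsc,a_{r})\leq\prod_{h}\rho_{R_{h}}(a_{h})\kappa(a_{h})$ valid for all pairwise coprime arguments.
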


\textbf{Remark.}
Using \eqref{eq:rhoRbound} and the trivial bound
\eqref{eq:rhotrivialbound} on $\rho^{*}$, we see that
\begin{equation*}
  1 \leq \Delta_{D^{*}} \leq \prod_{p|D^{*}}\Big(1+\frac{1}{p}\Big)^C
\end{equation*}
with $C = g \cdot \max_{p} \sum_{\nu_{h} \leq \deg(R_{h}) \; (1 \leq h \leq r)} \tilde{G}(p^{\nu_{1}},\dotsc,p^{\nu_{r}})$.
Therefore $\Delta_{D^{*}}$ has mean value one when averaged over $D^{*}$.

\begin{corollary}
  \label{cor:Fmultbound}
  Under the assumptions of Theorem~\ref{thm:mainbound},
  \begin{align*}
    \sum_{x < n \leq x+y} F\big( &|Q_{1}(n)| , \dotsc , |Q_{k}(n)| \big) \\
    &\ll \Delta_{D^{*}} y \prod_{g < p \leq x} \Big( 1 - \frac{\rho(p)}{p} \Big)
    \prod_{\substack{p \leq x \\ p \nmid D^{*}}} \mspace{4.0mu} \prod_{h = 1}^{r}
    \Big( 1 + \tilde{G}^{(h)}(p) \frac{\rho_{R_{h}}(p)}{p} \Big)
  \end{align*}
  where $\Delta_{D^{*}}$ is defined by \eqref{eq:deltaDstar}.
  The dependencies of the various constants are as described
  in Theorem~\ref{thm:mainbound}.
\end{corollary}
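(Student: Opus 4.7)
The plan is to deduce the Euler-product bound of Corollary~\ref{cor:Fmultbound} from Corollary~\ref{cor:discbound} by evaluating the arithmetic sum prime by prime. Applying~\eqref{eq:Ftildesubm} with $b_1=\cdots=b_r=1$ gives $\tilde{F}(n_1,\ldots,n_r) \leq \tilde{F}(1,\ldots,1)\,\tilde{G}(n_1,\ldots,n_r)$, and the prefactor $\tilde{F}(1,\ldots,1)$ is absorbed into the implicit constant. On the pairwise coprime tuples $(n_1,\ldots,n_r)$ that appear in Corollary~\ref{cor:discbound}, iterating the submultiplicativity of $\tilde{G}$ one coordinate at a time yields $\tilde{G}(n_1,\ldots,n_r) \leq \prod_h \tilde{G}^{(h)}(n_h)$, and each $\tilde{G}^{(h)}$ is in turn submultiplicative in a single variable, so $\tilde{G}^{(h)}(n_h) \leq \prod_{p^\nu\|n_h} \tilde{G}^{(h)}(p^\nu)$.

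Next, I would enlarge the range of summation by replacing the constraint $n_1\cdots n_r \leq x$ with the weaker condition that every prime factor of $n_1\cdots n_r$ be at most $x$; since the summand is non-negative this can only increase the sum. The conditions $(n_i,n_j)=1$, $(n_1\cdots n_r,D^*)=1$ and the new prime-factor ceiling all factor across primes, turning the sum into the Euler product
\[
\prod_{\substack{p\leq x\\ p\nmid D^*}}\left(1+\sum_{h=1}^r \sum_{\nu\geq 1} \tilde{G}^{(h)}(p^\nu)\,\frac{\rho_{R_h}(p^\nu)}{p^\nu}\right).
\]
For $p\nmid D^*$ the argument that yields \eqref{eq:rhocoprimetodiscbound} applies also to the irreducible factor $R_h$ (whose discriminant divides $D^*$), giving $\rho_{R_h}(p^\nu)=\rho_{R_h}(p)$ for every $\nu\geq 1$. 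The inner sum therefore collapses to $\rho_{R_h}(p)\sum_{\nu\geq 1}\tilde{G}^{(h)}(p^\nu)/p^\nu$.

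The remaining step is to peel off the $\nu=1$ term and control the tail. By the second bound of~\eqref{eq:G3} we have $\tilde{G}^{(h)}(p^\nu)\leq B p^{g\eps\nu}$, and the hypothesis $\eps<\alpha/(50g(g+1/\delta))$ keeps $g\eps$ bounded well below $1$; combined with $\rho_{R_h}(p)\leq g$, the $\nu\geq 2$ contribution to each Euler factor is therefore $O(p^{-1-\eta})$ for some fixed $\eta>0$. Using the elementary inequality $1+\sum_h a_h \leq \prod_h(1+a_h)$ (valid for $a_h\geq 0$), each Euler factor is bounded above by
\[
\bigl(1+O(p^{-1-\eta})\bigr)\prod_{h=1}^r\left(1+\tilde{G}^{(h)}(p)\,\frac{\rho_{R_h}(p)}{p}\right),
\]
and the product of the error factors over $p$ is $\exp\bigl(\sum_p O(p^{-1-\eta})\bigr)=O(1)$, which gives the bound stated in the corollary. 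The main point to watch is this tail estimate: the smallness hypothesis on $\eps$ is precisely what makes the tail $p$-summable and permits the clean factorization.
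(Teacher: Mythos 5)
Your argument is correct, and it reaches Corollary~\ref{cor:Fmultbound} by a genuinely different route from the paper. The paper does not pass through Corollary~\ref{cor:discbound} at all: it proves Lemma~\ref{lem:littleFmultbound} directly for the sum $\tilde{S}$ appearing in Theorem~\ref{thm:mainbound}, first invoking Lemmas~\ref{lem:lemma3} and~\ref{lem:lemma4} to replace the constraint $n_{1}\dotsm n_{r}\leq x$ by a smoothness condition $P^{+}(n_{1}\dotsm n_{r})\leq x^{(2g-2)/\delta}$ (chosen large enough to contain all prime factors of $D^{*}$), then splitting off the $D^{*}$-part to produce $\Delta_{4}\asymp\Delta_{D^{*}}$, and finally truncating each Euler factor to $\nu_{1}+\dotsb+\nu_{r}\leq 1$ using \eqref{eq:H2}, \eqref{eq:H3}, \eqref{eq:rhoRbound} and \eqref{eq:rhocoprimetodiscbound} applied to $\rho^{*}$. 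Your version instead takes the already-established bound of Corollary~\ref{cor:discbound} as the starting point, enlarges the summation range by positivity alone (which is simpler than the paper's two-sided Lemmas~\ref{lem:lemma3}--\ref{lem:lemma4}, but is available to you only because an upper bound is all that is required here), and uses the Hensel-type identity $\rho_{R_{h}}(p^{\nu})=\rho_{R_{h}}(p)$ for each irreducible factor separately, justified by $\Disc(R_{h})\mid D^{*}$ via the resultant factorization of $\Disc(Q^{*})$. Both tail estimates rest on the same mechanism (the smallness of $g\eps$ together with a power saving in $p^{\nu}$), so the arguments are comparable in difficulty; what the paper's route buys is that its Lemma~\ref{lem:littleFmultbound} simultaneously yields the matching \emph{lower} bound for $\tilde{S}$ when $F$ is multiplicative, which is then reused in the proof of Theorem~\ref{thm:lowerbound} and which your one-directional chain of inequalities would not provide.
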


This corollary sheds some light on the difference of behavior
between the part of the sum that depends on $D^{*}$ and
the part that is independant of $D^{*}$.
Indeed for primes $p \nmid D^{*}$,
only the values $\tilde{G}(1,\dotsc,p,\dotsc,1)$,
where $p$ is at the $h$-th place ($1 \leq h \leq r$),
are involved in the bound,
whereas for primes $p|D^{*}$ we have to take into account the
values $\tilde{G}(p^{\nu_{1}},\dotsc,p^{\nu_{r}})$ for $\nu_{h} \leq \deg(R_{h})$ ($1 \leq h \leq r$).
As will be shown in the proof, this is due to the fact that $\rho^{*}(p^{\nu})$
is bounded when $p \nmid D^{*}$, whereas it can be very large when $p|D^{*}$.
It can indeed be as large as the right-hand side of
\eqref{eq:rhostewartbound} as shown by Stewart \cite{stewart}.

Our second theorem gives an order of magnitude instead of an upper bound.

\begin{theorem}
  \label{thm:lowerbound}
  Under the assumptions of Theorem~\ref{thm:mainbound}, and assuming further
  that $Q$ has no fixed prime divisor, $F$ is multiplicative and
  \begin{align}
    \label{eq:Fmino}
    &\phantom{( n_{1},\dotsc,n_{k} \geq 1 )} & F(n_{1},\dotsc,n_{k}) &\gg \eta^{\Omega(n_{1} \dotsc n_{k})} & &( n_{1},\dotsc,n_{k} \geq 1 )
  \end{align}
  for some $\eta \in ]0,1[$, we have
  \begin{align}
    \notag
    \sum_{x < n \leq x+y} &F\big( |Q_{1}(n)| , \dotsc , |Q_{k}(n)| \big) \\
    \label{eq:lowerbound1}
    &\asymp y \prod_{g < p \leq x} \Big( 1 - \frac{\rho(p)}{p} \Big)
    \sum_{n_{1} \dotsm n_{r} \leq x}
    \tilde{F}(n_{1},\dotsc,n_{r})
    \frac{ \hat{\rho}_{\mathbf{R}}(n_{1},\dotsc,n_{r}) }{ [n_{1}\kappa(n_{1}),\dotsc,n_{r}\kappa(n_{r})] } \\
    \label{eq:lowerbound2}
    &\asymp \Delta_{D^{*}} y \prod_{g < p \leq x} \Big( 1 - \frac{\rho(p)}{p} \Big)
    \prod_{\substack{p \leq x \\ p \nmid D^{*}}} \prod_{h = 1}^{r}
    \Big( 1 + \tilde{F}^{(h)}(p) \frac{\rho_{R_{h}}(p)}{p} \Big)
  \end{align}
  where $\Delta_{D^{*}}$ is defined by \eqref{eq:deltaDstar}
  and the implied constant depends at most on $g$, $\alpha$, $\delta$, $A$, $B$, $\eta$.  
\end{theorem}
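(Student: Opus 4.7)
The upper bound halves of \eqref{eq:lowerbound1} and \eqref{eq:lowerbound2} are free: the former is exactly Theorem~\ref{thm:mainbound} applied with multiplicative $F$, and the latter is Corollary~\ref{cor:Fmultbound}, upon noting that $\tilde{G} = \tilde{F}$ when $F$ is multiplicative. The work therefore lies in the lower bounds and in showing that the two right-hand sides have the same order of magnitude.

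For the lower bound of \eqref{eq:lowerbound1} I would run the Nair--Tenenbaum argument in reverse via a lower-bound sieve. Fix a small $\sigma \in (0,1)$ (depending on $\alpha$, $\delta$, $g$, $\eta$) and set $z = x^{\sigma}$. Given $n \in (x,x+y]$ with $|Q(n)| \neq 0$, write $|R_h(n)| = n_h t_h$ where $n_h$ is built from the prime powers $p^{\nu} \| R_h(n)$ with $p \leq z$, so that $P^{-}(t_h) > z$. Multiplicativity of $\tilde{F}$ together with $F \gg \eta^{\Omega}$ yields
\begin{equation*}
  \tilde{F}\bigl(|R_1(n)|, \dotsc, |R_r(n)|\bigr) = \tilde{F}(n_1, \dotsc, n_r)\,\tilde{F}(t_1, \dotsc, t_r) \gg_{\sigma, \eta, g} \tilde{F}(n_1, \dotsc, n_r),
\end{equation*}
since $\Omega(t_1 \dotsm t_r) \leq g/\sigma$. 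Partitioning the sum over $n$ according to the tuple $(n_1, \dotsc, n_r)$ and the residue class of $n$ modulo $m := [n_1 \kappa(n_1), \dotsc, n_r \kappa(n_r)]$, the task reduces to counting $n \in (x,x+y]$ in a prescribed residue class modulo $m$ for which $|R_h(n)|/n_h$ is coprime to $\prod_{p \leq z,\, p \nmid m} p$ for each $h$. Provided $m$ is small enough (say $m \leq y^{1-\beta}$ for some $\beta > 0$), the fundamental lemma of the sieve produces a lower bound of the expected order $(y/m) \prod_{g < p \leq x}(1 - \rho(p)/p)$ after the usual factorisation of local densities; summing over the $\hat{\rho}_{\mathbf{R}}(n_1, \dotsc, n_r)$ admissible residues recovers the RHS of \eqref{eq:lowerbound1}, truncated to $n_1 \dotsm n_r \leq x^{\sigma'}$ for some $\sigma' > 0$.

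The comparison of the two right-hand sides, and the passage from the truncated sum to the full sum, both follow from a single Euler-product manipulation. The function $(n_1, \dotsc, n_r) \mapsto \tilde{F}(n_1, \dotsc, n_r) \hat{\rho}_{\mathbf{R}}(n_1, \dotsc, n_r) / [n_1 \kappa(n_1), \dotsc, n_r \kappa(n_r)]$ is multiplicative, so the arithmetic sum in \eqref{eq:lowerbound1} factors, up to the cut-off $\leq x$, into local factors. At $p \nmid D^{*}$, relation \eqref{eq:rhocoprimetodiscbound} forces $\max_h \nu_h = 1$ with a single index $h$ contributing, collapsing the local factor to $1 + \tilde{F}^{(h)}(p) \rho_{R_h}(p)/p$; at $p \mid D^{*}$ the local factor is precisely the one in \eqref{eq:deltaDstar}. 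The same computation shows that the contribution of $n_1 \dotsm n_r > x^{\sigma'}$ is bounded by a constant times the head of the sum, thereby completing the transition from the truncated lower bound to \eqref{eq:lowerbound1} and simultaneously yielding \eqref{eq:lowerbound2}.

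The main obstacle is making the sieve step uniform in $D^{*}$. The sieving modulus $m$ differs from $n_1 \dotsm n_r$ precisely at primes dividing $D^{*}$, where the exponents in $m$ can be as large as $\deg R_h$; Stewart's estimate \eqref{eq:rhostewartbound} is needed, exactly as in the proof of Theorem~\ref{thm:mainbound}, to absorb these large prime powers and to guarantee that the fundamental lemma delivers a lower bound of the expected order uniformly in the discriminant.
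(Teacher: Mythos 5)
Your proposal is correct and follows essentially the same route as the paper: upper bounds from Theorem~\ref{thm:mainbound} and Corollary~\ref{cor:Fmultbound}, then a smooth/rough factorisation of $R_h(n)$ with the rough part absorbed into a constant via \eqref{eq:Fmino}, a two-sided Brun-type sieve count made uniform in $D^{*}$ through Stewart's bound \eqref{eq:rhostewartbound}, and finally a Rankin/Euler-product comparison (the paper's Lemmas~\ref{lem:lemma1}--\ref{lem:lemma4} and Lemma~\ref{lem:littleFmultbound}) to pass from the truncated sum to the full range and to identify the two right-hand sides. The only loose points are cosmetic: the bound on $\Omega(t_1\dotsm t_r)$ should be $(g+1/\delta)/\sigma$ rather than $g/\sigma$ since $|Q(n)|$ also carries $\|Q\|\leq x^{1/\delta}$, and at $p\nmid D^{*}$ the local factor only collapses to $1+\sum_h\tilde F^{(h)}(p)\rho_{R_h}(p)/p$ up to a convergent $1+O(p^{-2})$ correction rather than exactly.
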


Thus when $F$ is multiplicative and doesn't take too small values
in the sense above, the bound we obtain in Theorem~\ref{thm:mainbound}
is sharp. The $D^{*}$-dependency of the sum is accurately given by $\Delta_{D^{*}}$
in this case.

Eventually we provide the following result analogous to Theorem~3
of Nair of Tenenbaum \cite{nairtenenbaum}, to illustrate how
the generality of Theorem~\ref{thm:mainbound} can be used.

\begin{theorem}
  Under the assumptions of Theorem~\ref{thm:mainbound}, and provided 
  that $Q(0) \neq 0$, we have
 \begin{multline*}
    \sum_{x < p \leq x+y} F\big(|Q_{1}(p)|,\dotsc,|Q_{k}(p)|\big)
    \ll \frac{Q(0)}{\varphi\big(Q(0)\big)} \Delta_{D^{*}}
    \frac{y}{\log x} \prod_{g < p \leq x} \Big( 1 - \frac{\rho(p)}{p} \Big) \\
    \times \sum_{n_{1} \dotsm n_{r} \leq x}
    \tilde{F}(n_{1},\dotsc,n_{r})
    \frac{ \hat{\rho}_{\mathbf{R}}(n_{1},\dotsc,n_{r}) }{ [n_{1}\kappa(n_{1}),\dotsc,n_{r}\kappa(n_{r})] }
    \text{.}
  \end{multline*}
  where $\Delta_{D^{*}}$ is defined by \eqref{eq:deltaDstar}.
  The dependencies of the various constants are as described in
  Theorem~\ref{thm:mainbound}.
\end{theorem}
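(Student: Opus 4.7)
The plan is to follow the scheme of Theorem~3 of Nair and Tenenbaum \cite{nairtenenbaum} in the setting of our Theorem~\ref{thm:mainbound}: one uses an upper-bound sieve to reduce the sum over primes to sums over integers free of small prime factors, then applies Theorem~\ref{thm:mainbound} to each piece. Fix a sieving parameter $z$ as a small power of $y$ (say $z = y^{\alpha/(20g)}$). Every prime $p \in (x,x+y]$ with $p > z$ satisfies $(p, P(z)) = 1$ for $P(z) = \prod_{q \leq z} q$, and Brun's combinatorial sieve furnishes bounded weights $\nu_{d}$ supported on squarefree divisors of $P(z)$ with $d \leq z^{C}$ such that
\begin{equation*}
  \sum_{x < p \leq x+y} F\big(|Q_{1}(p)|,\dotsc,|Q_{k}(p)|\big)
  \leq \sum_{d | P(z)} \nu_{d} \sum_{\substack{x < n \leq x+y \\ d | n}} F\big(|Q_{1}(n)|,\dotsc,|Q_{k}(n)|\big) \text{.}
\end{equation*}

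For each admissible $d$ the substitution $n = dm$ turns the inner sum into one of the type covered by Theorem~\ref{thm:mainbound}, applied to the shifted polynomials $Q_{j}(dX)$ on the interval $(x/d,(x+y)/d]$. After summing the resulting bounds over $d | P(z)$, the sieve weights combine with the local densities through Mertens-type asymptotics to produce the extra factor $\prod_{q \leq z}(1 - 1/q) \asymp 1/\log x$ recorded in the statement. The factor $Q(0)/\varphi\big(Q(0)\big)$ then emerges from a local analysis at primes $q | Q(0)$: since $Q_{j}(p) \equiv Q_{j}(0) \pmod{p}$, a prime $p$ can divide $Q_{j}(p)$ only when $p | Q_{j}(0)$, so forcing $n$ to be prime removes the contribution of the root $0$ to the local density $\rho(q)/q$ at each $q | Q(0)$, and reorganising the Euler product yields the factor $\prod_{q | Q(0)} q/(q-1)$.

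The main obstacle will be running the sieve cleanly through the machinery of Theorem~\ref{thm:mainbound} while preserving uniformity in the discriminant $D^{*}$. The sieve level $z^{C}$ must be taken strictly below a suitable power of $y$ so that the hypothesis $x^{\alpha} \leq y/d$ of Theorem~\ref{thm:mainbound} remains valid for the inner sums after the substitution $n = dm$; simultaneously, one must verify that the discriminants of the shifted polynomials $\prod_{j} Q_{j}(dX)$ do not violate the condition $x \geq c_{0} \| Q \|^{\delta}$. Coordinating these choices with the smoothness decomposition of $n$ that underlies the proof of Theorem~\ref{thm:mainbound}, so that the sieve weights at small primes are absorbed into the local factors $\hat{\rho}_{\mathbf{R}}$ and $\tilde{F}$ without spoiling the $D^{*}$-uniformity, constitutes the technical heart of the argument.
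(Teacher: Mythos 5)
The paper does not actually write out a proof of this theorem: it defers to the proof of Theorem~3 in Nair--Tenenbaum, which goes through verbatim in the present setting. That argument adjoins the polynomial $Q_{0}(X)=X$ to the family and applies Theorem~\ref{thm:mainbound} in $k+1$ variables to $F_{0}(n_{0},n_{1},\dotsc,n_{k})=g(n_{0})F(n_{1},\dotsc,n_{k})$, where $g$ is the (multiplicative, bounded by $1$) indicator of integers free of prime factors below $\sqrt{x}$; the prime sum is majorised by the resulting sum over all $n$, the $n_{0}$-sum contributes $O(1)$, and the factor $\frac{Q(0)}{\varphi(Q(0))}\frac{1}{\log x}$ falls out of the identity $\rho_{XQ}(p)=\rho_{Q}(p)+\mathbf{1}[p\nmid Q(0)]$ inside the product $\prod_{p\le x}(1-\rho_{XQ}(p)/p)$. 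Your route is different, and it has a genuine gap.

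The gap is the sign of the sieve weights. Brun's upper-bound weights $\nu_{d}$ are a truncation of $\mu(d)$ and take both signs. The pointwise majorisation $\sum_{x<p\le x+y}F\le\sum_{d\mid P(z)}\nu_{d}\sum_{d\mid n}F$ is legitimate because $F\ge 0$, but the next step --- applying Theorem~\ref{thm:mainbound} to each inner sum and ``summing the resulting bounds over $d$'' so that the weights ``combine with the local densities through Mertens-type asymptotics'' --- is not: for every $d$ with $\nu_{d}<0$ you would need a matching \emph{lower} bound on $\sum_{x<n\le x+y,\,d\mid n}F(\dotsc)$, and no such bound exists for a general $F\in\mathcal{M}_{k}(A,B,\eps)$ (Theorem~\ref{thm:mainbound} is one-sided, and $F$ may vanish on the relevant residue classes). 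An upper-bound sieve with signed weights can only be composed with asymptotics for the congruence sums, never with one-sided estimates; this is precisely why the sieve in this paper is applied only to the pure counting problem of Lemma~\ref{lem:sievebound}, where two-sided bounds are available, and why the primality condition must instead be encoded as an extra multiplicative factor of $F$. The secondary issues you flag (norm and discriminant of $Q_{j}(dX)$, the range condition $x^{\alpha}\le y/d$) are real but minor by comparison; the signed-weight obstruction is structural, and the outline as written cannot be completed.
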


We refer to \cite[Proof of Theorem~3]{nairtenenbaum} for the proof of this
Theorem as it is absolutely analogous in our setting.

It is easy to derive the theorems of the introduction from the
previous results. Theorem~\ref{thm:introdiscbound} follows immediately
from Corollary~\ref{cor:discbound} upon observing that when
the $Q_{i}$ are irreducible and $F$ is multiplicative we have
$F=\tilde{F}=G=\tilde{G}$, $k=r$ and $Q_{i}=R_{i}$ for $1 \leq i \leq k$.
Theorem~\ref{thm:introk=1bound} is similarly
derived from Corollary~\ref{cor:Fmultbound}.
We can also recover Theorem~\ref{thm:holowinsky} of Holowinsky with
the refinements mentioned in the introduction by applying
Corollary~\ref{cor:discbound} and its following remark with
$Q_{1}=X$, $Q_{2}=X+\ell$ and $F(n_{1},n_{2})=\lambda_{1}(n_{1})\lambda_{2}(n_{2})$.

The rest of this article is dedicated to proving
Theorems~\ref{thm:mainbound}, \ref{thm:lowerbound}
and Corollaries~\ref{cor:discbound}, \ref{cor:Fmultbound}
which share the same hypotheses (except for
some additional assumptions for Theorem~\ref{thm:lowerbound}).
We therefore place ourselves under the assumptions of
Theorem~\ref{thm:mainbound} for the remaining sections.
We also assume that $F$ is non-zero and further that
$F(1,\dotsc,1)=1$, which is possible upto multipliying
$F$ by a certain constant.
All implicit constants throughout the article will depend
at most on $g$, $\alpha$, $\delta$, $A$, $B$, $\eps$
unless otherwise stated.

\section{Technical lemmas}
\label{sec:technicallemmas}

The purpose of this section is to expose a few technical lemmas inspired
by Lemma~1 and Lemma~2 by Nair and Tenenbaum in \cite{nairtenenbaum}.

We first have to introduce the functions these
lemmas will apply to and their properties.

\begin{lemma}
  \label{lem:FisH}
  Let $\sigma_{1},\dotsc,\sigma_{r}$ be $r$
  positive multiplicative functions satisfying
  ${\sigma_{h}(p^{\nu}) \ll 1}$ uniformly in primes $p$ and
  integers $\nu \geq 1$ ($ 1 \leq h \leq r)$.
  Define
  \begin{align}
    \label{eq:FisH}
    H(n_{1},\dotsc,n_{r}) &\coloneqq \tilde{F}(n_{1},\dotsc,n_{r})
    \frac{ \hat{\rho}_{\mathbf{R}}(n_{1},\dotsc,n_{r}) }{ [n_{1}\kappa(n_{1}),\dotsc,n_{r}\kappa(n_{r})] }
    \sigma_{1}(n_{1}) \dotsm \sigma_{r}(n_{r}), \\
    \notag
    T(n_{1},\dotsc,n_{r}) &\coloneqq \tilde{G}(n_{1},\dotsc,n_{r})
    \frac{ \hat{\rho}_{\mathbf{R}}(n_{1},\dotsc,n_{r}) }{ [n_{1}\kappa(n_{1}),\dotsc,n_{r}\kappa(n_{r})] }
    \sigma_{1}(n_{1}) \dotsm \sigma_{r}(n_{r}) \text{.}
  \end{align}
  We then have
  \begin{equation}
    \label{eq:H1}
    H(a_{1}b_{1},\dotsc,a_{r}b_{r}) \leq T(b_{1},\dotsc,b_{r}) H(a_{1},\dotsc,a_{r})
  \end{equation}
  for all integers $a_{i}$, $b_{j}$ such that $(a_{1} \dotsm a_{r},b_{1} \dotsm b_{r})=1$. We also
  have\footnote{Here and in the sequel the prime next to the
    sum indicates that the sum is over variables which are
    not all zero.}
  \begin{align}
    \label{eq:H2}
    \sideset{}{'}\sum_{\nu_{1},\dotsc,\nu_{r}} T(p^{\nu_{1}},\dotsc,p^{\nu_{r}})
    &\ll \frac{1}{p} \text{,} \\
    \label{eq:H3}
    \sum_{\nu_{1}+\dotsb+\nu_{r} > 2g} T(p^{\nu_{1}},\dotsc,p^{\nu_{r}})
    \cdot p^{\frac{1}{4g}(\nu_{1}+\dotsb+\nu_{r})} &\ll \frac{1}{p^{1+1/4}}
    \text{.}
  \end{align}
\end{lemma}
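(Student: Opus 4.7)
I would handle the three parts separately.

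For \eqref{eq:H1}, my plan is direct verification. Given $(a_1,\dots,a_r)$ and $(b_1,\dots,b_r)$ with $(a_1\dotsm a_r,b_1\dotsm b_r)=1$, multiplying together the inequality $\tilde F(a_1b_1,\dots,a_rb_r)\leq\tilde G(b_1,\dots,b_r)\tilde F(a_1,\dots,a_r)$ from \eqref{eq:Ftildesubm}, the multiplicativity of $\hat\rho_{\mathbf R}$ noted after \eqref{eq:rhoR}, the splitting $[a_1b_1\kappa(a_1b_1),\dots,a_rb_r\kappa(a_rb_r)]=[a_1\kappa(a_1),\dots][b_1\kappa(b_1),\dots]$ on coprime blocks, and the multiplicativity of each $\sigma_h$ gives exactly \eqref{eq:H1}.

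For \eqref{eq:H2} and \eqref{eq:H3}, at a prime-power tuple $(p^{\nu_1},\dots,p^{\nu_r})$ with at least one $\nu_h\geq 1$, I set $M=\max_h\nu_h$ and $N=\nu_1+\dots+\nu_r$, so that the lcm equals $p^{M+1}$. Combining \eqref{eq:rhoRbound}, Stewart's bound \eqref{eq:rhostewartbound}, \eqref{eq:G3}, and the uniform estimate $\sigma_h(p^{\nu_h})\ll 1$ yields
\begin{equation*}
T(p^{\nu_1},\dots,p^{\nu_r}) \ll \min\bigl(A^{gN},Bp^{g\eps N}\bigr)\,p^{-\lceil N/g\rceil},
\end{equation*}
where I rewrite Stewart's floor as $\rho^*(p^N)/p^N\leq g^* p^{-\lceil N/g^*\rceil}\leq g/p$, which preserves the full factor of $p$ even when $N$ is small. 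I would then group the sum by $N\geq 1$, using that there are $O(N^{r-1})$ tuples with $\nu_1+\dots+\nu_r=N$.

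For \eqref{eq:H2}: the terms with $N\leq g$ contribute $\ll 1/p$ since $\lceil N/g\rceil=1$, the number of tuples is $O(1)$, and $\tilde G\leq A^{g^2}$ is bounded; for $N>g$ I split into ranges of $p$. For primes $p$ larger than a threshold $p_0(A,g)$, the ratio $A^{gN}p^{-\lceil N/g\rceil}\leq(A^{g^2}/p)^{N/g}$ decays geometrically in $N$, contributing $\ll 1/p^{1+1/g}=o(1/p)$. For $p\leq p_0$ I use $\tilde G\leq Bp^{g\eps N}$; the hypothesis $\eps<1/g^2$ (implied by $\eps<\alpha/(50g(g+1/\delta))$) provides geometric decay in $N$ summing to $O(1)$, which is absorbed into the final constant since only finitely many small primes are involved. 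The argument for \eqref{eq:H3} is parallel: the extra factor $p^{N/(4g)}$ turns the decay into $p^{-3N/(4g)}$, so for $N\geq 2g+1$ the leading term is $\ll p^{-3/2-3/(4g)}\leq p^{-5/4}$, and the same large/small prime split gives summability, with convergence on small primes coming from $g\eps<3/(4g)$, again implied by the assumption on $\eps$.

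The main obstacle I expect is squeezing the full $1/p$ in \eqref{eq:H2}. A naive application of Stewart's bound gives only $\rho^*(p^N)/p^N\leq g p^{-N/g}$, which is insufficient at $N=1,2$ when $g\geq 2$; the trick is to read the floor in \eqref{eq:rhostewartbound} as a ceiling in the denominator exponent, exploiting that $\lceil N/g^*\rceil\geq 1$ for every $N\geq 1$, which provides the $1/p$ decay uniformly.
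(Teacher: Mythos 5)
Your proposal is correct and follows essentially the same route as the paper: submultiplicativity \eqref{eq:Ftildesubm} plus multiplicativity of $\hat{\rho}_{\mathbf{R}}$ and the $\sigma_h$ for \eqref{eq:H1}, and the reduction $T(p^{\nu_1},\dotsc,p^{\nu_r})\ll\min(A^{g\nu},Bp^{g\eps\nu})\rho^{*}(p^{\nu})/p^{\nu}$ via \eqref{eq:rhoRbound} and \eqref{eq:G3} followed by Stewart's bound for \eqref{eq:H2} and \eqref{eq:H3}. Your ceiling reformulation of Stewart's exponent is exactly equivalent to the paper's use of the trivial bound \eqref{eq:rhotrivialbound} for small $\nu_1+\dotsb+\nu_r$, and your large/small-prime split is an unneeded but harmless elaboration of the paper's direct estimate $\sum_{\nu>2g}p^{(g\eps+\frac{1}{4g}-\frac{1}{g})\nu}\nu^{r}\ll p^{-1-1/4}$.
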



\begin{proof}
  The inequality \eqref{eq:H1} follows immediately from \eqref{eq:Ftildesubm}
  and the multiplicativity of $\hat{\rho}_{\mathbf{R}}$.
  
  To obtain the two next bounds on $T$, we apply \eqref{eq:rhoRbound},
  \eqref{eq:G3} and the bounds $\sigma_{h}(p^{\nu}) \ll 1$ ($1 \leq h \leq r$) to obtain
  \begin{equation}
    \label{eq:Tmaj}
    T(p^{\nu_{1}},\dotsc,p^{\nu_{r}}) \ll
    \min( A^{g\nu} , B p^{g\eps\nu}) \frac{\rho^{*}(p^{\nu})}{p^{\nu}}
  \end{equation}
  with $\nu = \nu_{1}+\dotsb+\nu_{r}$.
  Using Stewart's bound \eqref{eq:rhostewartbound} on $\rho^{*}$, we obtain
  \begin{equation*}
    \sum_{\nu_{1}+\dotsb+\nu_{r} > 2g} T(p^{\nu_{1}},\dotsc,p^{\nu_{r}}) \cdot p^{\frac{1}{4g}(\nu_{1}+\dotsb+\nu_{r})}
    \ll \sum_{\nu > 2g} p^{\big(g\eps+\frac{1}{4g}-\frac{1}{g}\big) \nu} \nu^{r} \ll \frac{1}{p^{1+c}}
  \end{equation*}
  with $c = \frac{23}{50} \geq \frac{1}{4}$. This proves \eqref{eq:H3}, and to prove \eqref{eq:H2}
  we can now restrict ourselves to the (finite) sum over the $\nu_{i}$ such
  that $\nu_{1}+\dotsb+\nu_{r}\leq 2g$. For these $\nu_{i}$ we have $T(p^{\nu_{1}},\dotsc,p^{\nu_{r}}) \ll \frac{1}{p}$ by
  \eqref{eq:Tmaj} and \eqref{eq:rhotrivialbound}, which concludes
  the proof.
\end{proof}

\begin{lemma}
  \label{lem:lemma1}
  Let $H$ be as in Lemma~\ref{lem:FisH} and let
  $\theta_{1},\dotsc,\theta_{r}$ be $r$ positive multiplicative
  functions satisfying $\theta_{h}(p^{\nu}) = 1 + O(\frac{1}{p})$
  uniformly in primes $p$ and integers $\nu \geq 1$ for all $1 \leq h \leq r$.
  We have, uniformly in $z > 0$,
  \begin{equation*}
    \sum_{n_{1} \dotsm n_{r} \leq z} H(n_{1},\dotsc,n_{r})
    \theta_{1}(n_{1}) \dotsm \theta_{r}(n_{r})
    \ll \sum_{n_{1} \dotsm n_{r} \leq z} H(n_{1},\dotsc,n_{r})
    \text{.}
  \end{equation*}
\end{lemma}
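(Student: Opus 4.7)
The plan is to reduce to a bound on a single "$\Theta$-like" multiplicative function dominating $\Theta(\mathbf{n}) := \prod_h \theta_h(n_h)$, to expand it as a divisor sum, and then to apply the submultiplicativity \eqref{eq:H1} of $H$ together with the prime-wise bound \eqref{eq:H2} on $T$.

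First, since each $\theta_h$ is multiplicative with $\theta_h(p^{\nu})\leq 1+c/p$, we have $\theta_h(n_h) \leq \prod_{p\mid n_h}(1+c/p)$, so
\[
  \Theta(\mathbf{n}) \leq \prod_h \prod_{p\mid n_h}\Bigl(1+\frac{c}{p}\Bigr) = \prod_h \sum_{\substack{d_h \mid n_h \\ d_h \text{ sqfree}}} \frac{c^{\omega(d_h)}}{d_h},
\]
where the first inequality allows the overcount of primes dividing several $n_h$. Inserting this bound and swapping summation with the substitution $n_h=d_h m_h$, the left-hand side of the lemma is bounded by
\[
  \sum_{\mathbf{d}\text{ sqfree}} \prod_h \frac{c^{\omega(d_h)}}{d_h} \sum_{\substack{\mathbf{m}\\ \prod_h d_h m_h\leq z}} H(d_1 m_1,\dotsc,d_r m_r).
\]

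For fixed $\mathbf{d}$, let $P_{\mathbf{d}}$ be the set of primes dividing $d_1\dotsm d_r$, and write $n_h=b_h\tilde{n}_h$ where $b_h$ is the part of $n_h=d_h m_h$ supported on $P_{\mathbf{d}}$ and $\tilde{n}_h$ is coprime to $P_{\mathbf{d}}$. Then $d_h\mid b_h$, the support of $b_h$ lies in $P_{\mathbf{d}}$, and $(\prod_h b_h,\prod_h \tilde{n}_h)=1$, so \eqref{eq:H1} gives
\[
  H(n_1,\dotsc,n_r) \leq T(b_1,\dotsc,b_r)\, H(\tilde{n}_1,\dotsc,\tilde{n}_r).
\]
Summing over $\tilde{\mathbf{n}}$ without the coprimality constraint and extending the range $\prod_h \tilde{n}_h \leq z/\prod_h b_h$ to $\leq z$, the inner double sum is at most
\[
  \Bigl(\sum_{\substack{\mathbf{b}\\ d_h \mid b_h,\ \mathrm{supp}(b_h)\subseteq P_{\mathbf{d}}}} T(\mathbf{b})\Bigr) \sum_{n_1\dotsm n_r\leq z} H(\mathbf{n}).
\]

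It then remains to show that the constant
\[
  \mathcal{C} := \sum_{\mathbf{d}\text{ sqfree}} \prod_h \frac{c^{\omega(d_h)}}{d_h}
  \sum_{\substack{\mathbf{b}\\ d_h\mid b_h,\ \mathrm{supp}(b_h)\subseteq P_{\mathbf{d}}}} T(\mathbf{b})
\]
is $O(1)$. This is handled by a prime-by-prime factorisation: for each prime $p$ one must choose the exponent pattern $(v_p(d_h))_h \in \{0,1\}^r$ (not all zero on those primes that enter $P_{\mathbf{d}}$) and then the tuple $(\nu_1,\dotsc,\nu_r)$ of $p$-exponents of $\mathbf{b}$ with $\nu_h\geq v_p(d_h)$. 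For a prime $p\in P_{\mathbf{d}}$ with exponent pattern of weight $s\geq 1$, the $d$-contribution is $(c/p)^s$ and the $\mathbf{b}$-contribution is bounded by $\sideset{}{'}\sum_{\boldsymbol{\nu}} T(p^{\nu_1},\dotsc,p^{\nu_r}) \ll 1/p$ via \eqref{eq:H2}. Summing over the $2^r-1$ nonempty patterns yields a $p$-factor $1+O(1/p^2)$, hence $\mathcal{C} \ll \prod_p(1+O(1/p^2)) \ll 1$, finishing the proof.

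The main obstacle is the bookkeeping in steps three and four: ensuring that the decomposition $n_h = b_h \tilde{n}_h$ interacts cleanly with the divisibility constraint $d_h\mid n_h$ and with the coprimality hypothesis in \eqref{eq:H1}, and that after extending $\tilde{\mathbf{n}}$ to the full range the remaining arithmetic sum really does factorise prime-by-prime so that \eqref{eq:H2} can be applied uniformly in $\mathbf{d}$.
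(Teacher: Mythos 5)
Your proof is correct and follows essentially the same route as the paper's: expand the $\theta_h$ weight as a divisor sum, peel off the part of each $n_h$ supported on the primes of the chosen divisors via \eqref{eq:H1}, and control the resulting Euler product of local factors $1+O(1/p^{2})$ using \eqref{eq:H2}. Your two small variations --- dominating $\theta_h$ by the nonnegative multiplicative function $\prod_{p\mid n}(1+c/p)$ instead of working with the exact (possibly signed) coefficients $\lambda_h(p^{\nu})=\theta_h(p^{\nu})-\theta_h(p^{\nu-1})$, and splitting $n_h$ according to all primes of $d_1\dotsm d_r$ so that the hypothesis $(a_1\dotsm a_r,b_1\dotsm b_r)=1$ of \eqref{eq:H1} is manifestly satisfied --- are harmless refinements of the same method, not a different argument.
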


\begin{proof}
  For $1 \leq h \leq r$ and integers $n_{h}$, write
  \begin{equation}
    \label{eq:theta}
      \theta_{h}(n_{h}) = \sum_{d_{h}|n_{h}} \lambda_{h}(d_{h}).
  \end{equation}
  We have $\lambda_{h}(p^{\nu}) = \theta_{h}(p^{\nu}) - \theta_{h}(p^{\nu-1}) \ll \frac{1}{p}$
  for $\nu \geq 1$. For any integers $d_{h}$, $n_{h}$ such
  that $d_{h}|n_{h}$, we can write $n_{h}$ uniquely as $n_{h}=d_{h}t_{h}a_{h}$
  with $t_{h}|d_{h}^{\infty}$ and $(a_{h},d_{h})=1$. Using
  \eqref{eq:H1} and \eqref{eq:theta} we obtain
  \begin{align*}
    \sum_{n_{1} \dotsm n_{r} \leq z} &H(n_{1},\dotsc,n_{r})
    \theta_{1}(n_{1}) \dotsm \theta_{r}(n_{r}) \\
    &\leq \sum_{d_{1},\dotsc,d_{r}} \sum_{\substack{t_{1},\dotsc,t_{r} \\ t_{h}|d_{h}^{\infty}}}
    \lambda_{1}(d_{1}) \dotsm \lambda_{r}(d_{r}) T(d_{1}t_{1},\dotsc,d_{r}t_{r})
    \sum_{a_{1} \dotsm a_{r} \leq z} H(a_{1},\dotsc,a_{r}) \\
    &\leq \Delta_{1} \sum_{a_{1} \dotsm a_{r} \leq z} H(a_{1},\dotsc,a_{r})
  \end{align*}
  where
  \begin{equation*}
    \Delta_{1} = \prod_{p} \Big( 1 + \sideset{}{'}\sum_{s_{1},\dotsc,s_{r}}
    \lambda_{1}(p^{s_{1}}) \dotsm \lambda_{r}(p^{s_{r}})
    \sum_{\ell_{1},\dots,\ell_{r}}
    T(p^{s_{1}+\ell_{1}},\dotsc,p^{s_{r}+\ell_{r}}) \Big)
    \text{.}
  \end{equation*}
  Now by \eqref{eq:H2} and the bound $\lambda_{h}(p^{\nu}) \ll \frac{1}{p}$,
  we have
  \begin{equation*}
    \Delta_{1} = \prod_{p} \bigg( 1 + O\Big( \frac{1}{p^{2}} \Big) \bigg) \ll 1
  \end{equation*}
  which concludes the proof.
\end{proof}

\begin{lemma}
  \label{lem:lemma2}
  Let $H$ be as in Lemma~\ref{lem:FisH}.
  Then for $\chi > 0$, $z \geq e^{4g\chi}$, $\beta = \frac{\chi}{\log z}$,
  \begin{equation*}
    \sum_{P^{+}(n_{1} \dotsm n_{r}) \leq z} H(n_{1},\dotsc,n_{r}) (n_{1} \dotsm n_{r})^{\beta}
    \ll_{\chi} \sum_{P^{+}(n_{1} \dotsm n_{r}) \leq z} H(n_{1},\dotsc,n_{r}) \text{.}
  \end{equation*}
\end{lemma}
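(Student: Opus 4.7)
The plan is to adapt the proof of Lemma~\ref{lem:lemma1} to the non-multiplicative weight $n^{\beta}$. First I would employ the Dirichlet decomposition
\begin{equation*}
  n_{i}^{\beta} = \sum_{d_{i}|n_{i}} \tau^{\beta}(d_{i}),
\end{equation*}
where $\tau^{\beta}$ is multiplicative with $\tau^{\beta}(1) = 1$ and $\tau^{\beta}(p^{k}) = p^{k\beta}(1 - p^{-\beta})$ for $k \geq 1$. Substituting, writing $n_{i} = d_{i}t_{i}a_{i}$ with $t_{i}|d_{i}^{\infty}$ and $(a_{i},d_{i}) = 1$, and applying \eqref{eq:H1} to bound $H(d_{1}t_{1}a_{1},\dotsc,d_{r}t_{r}a_{r}) \leq T(d_{1}t_{1},\dotsc,d_{r}t_{r}) H(a_{1},\dotsc,a_{r})$, I would obtain
\begin{equation*}
  \sum_{P^{+}(n_{1}\dotsm n_{r}) \leq z} H(n_{1},\dotsc,n_{r}) (n_{1}\dotsm n_{r})^{\beta} \leq \Delta_{\beta} \sum_{P^{+}(n_{1}\dotsm n_{r}) \leq z} H(n_{1},\dotsc,n_{r}),
\end{equation*}
with $\Delta_{\beta} = \sum_{d,t : t_{i}|d_{i}^{\infty}} \prod_{i} \tau^{\beta}(d_{i}) \cdot T(d_{1}t_{1},\dotsc,d_{r}t_{r})$ summed over tuples with $P^{+}(d_{1}\dotsm d_{r}) \leq z$. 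It remains to show $\Delta_{\beta} \ll_{\chi} 1$.

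The sum $\Delta_{\beta}$ factors as an Euler product over primes $p \leq z$. After a short simplification using the telescoping identity $\sum_{s=1}^{\sigma} (1 - p^{-\beta}) p^{s\beta} = p^{\beta\sigma} - 1$, the $p$-th local factor reduces to
\begin{equation*}
  L_{p} = 1 + \sideset{}{'}\sum_{\sigma_{1},\dotsc,\sigma_{r}} T(p^{\sigma_{1}},\dotsc,p^{\sigma_{r}}) \prod_{i : \sigma_{i} \geq 1} (p^{\beta\sigma_{i}} - 1).
\end{equation*}

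The crucial estimate I would use is the elementary bound $p^{\beta\sigma_{i}} - 1 \leq \sigma_{i}\beta\log p \cdot p^{\beta\sigma_{i}}$. Since at least one $\sigma_{i} \geq 1$ in every summand, this extracts one factor of $\beta \log p$, with the remaining factors $(\beta \log p)^{k-1}$ ($k \leq r$) absorbed into $O_{\chi}(1)$ via $\beta \log p \leq \chi$ for $p \leq z$. Splitting into $\sum \sigma_{i} \leq 2g$ (where $p^{\beta \sum \sigma_{i}} \leq e^{2g\chi}$ and \eqref{eq:H2} gives $\ll_{\chi} 1/p$) and $\sum \sigma_{i} > 2g$ (where the condition $\beta \leq 1/(4g)$---precisely the hypothesis $z \geq e^{4g\chi}$---permits $p^{\beta \sum \sigma_{i}} \leq p^{(\sum \sigma_{i})/(4g)}$, and \eqref{eq:H3} gives $\ll 1/p^{5/4}$, with the extra polynomial factor $\prod \sigma_{i}$ harmlessly absorbed) yields $L_{p} \leq 1 + C_{\chi} \beta \log p / p + O(1/p^{5/4})$.

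Finally, Mertens' theorem gives $\sum_{p \leq z} \log p / p \ll \log z$, so $\sum_{p \leq z} (L_{p} - 1) \ll_{\chi} \beta \log z + 1 = O_{\chi}(1)$, whence $\Delta_{\beta} \leq \prod_{p \leq z} L_{p} \ll_{\chi} 1$ and the result follows. The main technical obstacle is extracting the extra $\beta \log p$ factor from the $(p^{\beta\sigma_{i}} - 1)$ terms: the naive bound $\prod_{i} (p^{\beta\sigma_{i}} - 1) \leq p^{\beta \sum \sigma_{i}}$ would only give $\Delta_{\beta} \ll_{\chi} (\log z)^{O_{\chi}(1)}$, which is useless. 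The Dirichlet decomposition via $\tau^{\beta}$ is what makes this saving available, and the hypothesis $z \geq e^{4g\chi}$ is exactly what is needed to keep the tail $\sum \sigma_{i} > 2g$ under control through \eqref{eq:H3}.
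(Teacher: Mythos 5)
Your proposal is correct and follows essentially the same route as the paper: the Dirichlet decomposition $n^{\beta}=\sum_{d|n}\psi(d)$ (your $\tau^{\beta}$ is exactly the paper's $\psi$), the factorization $n_i=d_it_ia_i$ with $t_i|d_i^{\infty}$, the application of \eqref{eq:H1}, the telescoping to the local factor with $\prod(p^{\beta\nu_h}-1)$, the split at $\nu_1+\dotsb+\nu_r=2g$ using \eqref{eq:H2} and \eqref{eq:H3} together with $\beta\leq\frac{1}{4g}$, and the Mertens estimate at the end. The only cosmetic difference is that in the tail $\sum\sigma_i>2g$ the paper simply uses the trivial bound $\prod(p^{\beta\nu_h}-1)\leq p^{\beta\sum\nu_h}$, avoiding the extra polynomial factor $\prod\sigma_i$ you then have to absorb.
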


\begin{proof}
  For any integer $n$ write $n^{\beta} = \sum_{d|n} \psi(d)$. For any integers $d$, $n$ such
  that $d|n$ we can write $n$ uniquely as $n=dta$, $t|d^{\infty}$, $(a,d)=1$.
  Applying \eqref{eq:H1}, we obtain
  \begin{align*}
    &\sum_{P^{+}(n_{1} \dotsm n_{r}) \leq z} H(n_{1},\dotsc,n_{r}) (n_{1} \dotsm n_{r})^{\beta} \\
      &\leq \sum_{P^{+}(d_{1} \dotsm d_{r}) \leq z} \; \sum_{\substack{t_{1},\dotsc,t_{r} \\ t_{h}|d_{h}^{\infty}}}
      \psi(d_{1}) \dotsm \psi(d_{r}) T(d_{1}t_{1},\dotsc,d_{r}t_{r})
      \sum_{P^{+}(a_{1} \dotsm a_{r}) \leq z} H(a_{1},\dotsc,a_{r}) \\
      &\leq \Delta_{2} \sum_{P^{+}(a_{1} \dotsm a_{r}) \leq z} H(a_{1},\dotsc,a_{r})  
  \end{align*}
  where
  \begin{equation*}
    \Delta_{2} = \prod_{p \leq z} \: \sum_{s_{1},\dotsc,s_{r}}
    \psi(p^{s_{1}}) \dotsm \psi(p^{s_{r}})
    \sum_{\substack{\ell_{1},\dotsc,\ell_{r} \geq 0 \\ \ell_{h} = 0 \text{ if } s_{h}=0}}
    T(p^{s_{1}+\ell_{1}},\dotsc,p^{s_{r}+\ell_{r}}) \text{.}
  \end{equation*}
  We can rewrite this as
  \begin{align*}
    \Delta_{2} &= \prod_{p \leq z} \Big(1 +
    \sideset{}{'}\sum_{\nu_{1},\dotsc,\nu_{r}} T(p^{\nu_{1}},\dotsc,p^{\nu_{r}})
    \prod_{\substack{1 \leq h \leq r \\ \nu_{h} \neq 0}} \:
    \sum_{k = 1}^{\nu_{h}} \psi(p^{k}) \Big) \\
    &= \prod_{p \leq z} \Big( 1 + \sideset{}{'}\sum_{\nu_{1},\dotsc,\nu_{r}}
    T(p^{\nu_{1}},\dotsc,p^{\nu_{r}})
    \prod_{\substack{1 \leq h \leq r \\ \nu_{h} \neq 0}}
    \big( p^{\beta\nu_{h}} - 1 \big) \Big)
    \text{.}
  \end{align*}
  We bound the inner product by distinguishing two cases.
  If $1 \leq \nu_{1}+\dotsb+\nu_{r} \leq 2g$ we have,
  for all $h$, $\beta\nu_{h}\log p \leq 2 \chi g \frac{\log p}{\log z} \ll_{\chi} 1$.
  Therefore for all $h$, $p^{\nu_{h}\beta}-1 \ll_{\chi} \frac{\log p}{\log z}$ which is also $\ll_{\chi} 1$.
  Since at least one $\nu_{h}$ is $\neq 0$ we have
  \begin{equation*}
    \prod_{\substack{1 \leq h \leq r \\ \nu_{h} \neq 0}} \big( p^{\beta\nu_{h}} - 1 \big)
    \ll_{\chi} \frac{\log p}{\log z} \text{.}
  \end{equation*}
  If $\nu_{1}+\dotsb+\nu_{r} > 2g$ we use the trivial bound
  \begin{equation*}
    \prod_{\substack{1 \leq h \leq r \\ \nu_{h} \neq 0}} \big( p^{\beta\nu_{h}} - 1 \big)
    \leq p^{\beta(\nu_{1}+\dotsb+\nu_{r})} \leq p^{\frac{1}{4g}(\nu_{1}+\dotsb+\nu_{r})} \text{.}
  \end{equation*}
  Combining this with our bounds \eqref{eq:H2} and \eqref{eq:H3} on $T$
  we arrive at
  \begin{align*}
    \Delta_{2} &= \prod_{p \leq z} \bigg( 1 + O_{\chi}\Big( \frac{1}{\log z} \frac{\log p}{p}
    + \frac{1}{p^{1+1/4}} \Big) \bigg) \\
    &\leq \exp\bigg( O_{\chi}\Big( \frac{1}{\log z} \sum_{p \leq z} \frac{\log p}{p}
    + \sum_{p \leq z} \frac{1}{p^{1+1/4}} \Big) \bigg)
    \ll_{\chi} 1 \text{.}
  \end{align*}
\end{proof}

\begin{lemma}
  \label{lem:lemma3}
  Let $H$ be as in Lemma~\ref{lem:FisH} and $K > 0$.
  We have, uniformly in $z > 0$,
  \begin{equation*}
    \sum_{P^{+}(n_{1} \dotsm n_{r}) \leq z} H(n_{1},\dotsc,n_{r})
    \leq K^{O(1)} \sum_{P^{+}(n_{1} \dotsm n_{r}) \leq z^{1/K}} H(n_{1},\dotsc,n_{r})
    \text{.}
  \end{equation*}
\end{lemma}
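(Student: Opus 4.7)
The plan is to use a Rankin-style splitting of each $n_h$ into a $z^{1/K}$-smooth part and a rough part, and then bound the contribution of the rough part by $K^{O(1)}$ via Mertens's theorem, using the bound \eqref{eq:H2} on $T$.

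Concretely, for any tuple $(n_1,\dotsc,n_r)$ appearing on the left-hand side, write uniquely $n_h = a_h b_h$, where $a_h$ collects those prime factors of $n_h$ not exceeding $z^{1/K}$ and $b_h$ collects those in $(z^{1/K},z]$. Then $P^{+}(a_1\cdots a_r)\leq z^{1/K}$, the prime factors of $b_1\cdots b_r$ all lie in $(z^{1/K},z]$, and $(a_1\cdots a_r, b_1\cdots b_r)=1$. Hence by \eqref{eq:H1},
\[
  H(n_1,\dotsc,n_r) \leq T(b_1,\dotsc,b_r)\, H(a_1,\dotsc,a_r).
\]
Separating the sum according to this decomposition gives
\[
  \sum_{P^{+}(n_{1}\dotsm n_{r})\leq z} H(n_{1},\dotsc,n_{r})
  \leq \Delta_{3}\sum_{P^{+}(a_{1}\dotsm a_{r})\leq z^{1/K}} H(a_{1},\dotsc,a_{r}),
\]
where
\[
  \Delta_{3} \coloneqq \sum_{\substack{b_{1},\dotsc,b_{r}\\ P^{-}(b_{1}\dotsm b_{r})>z^{1/K}\\ P^{+}(b_{1}\dotsm b_{r})\leq z}} T(b_{1},\dotsc,b_{r}).
\]

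Next I would show that $\Delta_3 \ll K^{O(1)}$. Since $T$ is submultiplicative (inherited from \eqref{eq:Ftildesubm} together with the multiplicativity of $\hat\rho_{\mathbf R}$, the lcm factor, and the $\sigma_h$), the sum defining $\Delta_3$ factors as an Euler product:
\[
  \Delta_{3} \leq \prod_{z^{1/K}<p\leq z}\Big(1+\sideset{}{'}\sum_{\nu_{1},\dotsc,\nu_{r}} T(p^{\nu_{1}},\dotsc,p^{\nu_{r}})\Big).
\]
Invoking the uniform bound \eqref{eq:H2} on the primed sum at each prime, this becomes
\[
  \Delta_{3} \ll \prod_{z^{1/K}<p\leq z}\Big(1+O\big(\tfrac{1}{p}\big)\Big)
  \ll \exp\Big(O\Big(\sum_{z^{1/K}<p\leq z}\tfrac{1}{p}\Big)\Big).
\]
By Mertens's theorem the prime sum equals $\log K + O(1)$, so $\Delta_3 \ll K^{O(1)}$, finishing the proof.

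There is no real obstacle here: the argument is essentially a Rankin-type cut, and every ingredient is already packaged in Lemma~\ref{lem:FisH}. The only point that requires some care is the bookkeeping of the coprimality condition needed to apply \eqref{eq:H1}, which is automatic from the choice of threshold $z^{1/K}$ separating the $a_h$ and $b_h$.
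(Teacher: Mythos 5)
Your proof is correct and follows essentially the same route as the paper: split each $n_h$ into its $z^{1/K}$-smooth and rough parts, apply \eqref{eq:H1}, bound the rough contribution by an Euler product over primes in $(z^{1/K},z]$ using \eqref{eq:H2}, and conclude by Mertens. The paper's own argument is identical in all essentials, merely writing the final step as $\prod_{z^{1/K}<p\leq z}(1+\frac{1}{p})^{O(1)}\leq K^{O(1)}$ without spelling out Mertens.
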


\begin{proof}
  For all $1 \leq h \leq r$ we write $n_{h}=a_{h}b_{h}$ where $P^{+}(a_{h}) \leq z^{\frac{1}{K}}$
  and ${P^{-}(b_{h}) > z^{\frac{1}{K}}}$. Applying \eqref{eq:H1}, we obtain
  \begin{align*}
    \sum_{P^{+}(n_{1} \dotsm n_{r}) \leq z} &H(n_{1},\dotsc,n_{r}) \\
    &\leq \sum_{\substack{P^{+}(b_{1} \dotsm b_{r}) \leq z \\ P^{-}(b_{1} \dotsm b_{r}) > z^{1/K}}}
    \mspace{-8.0mu} T(b_{1},\dotsc,b_{r}) \sum_{P^{+}(a_{1} \dotsm a_{r}) \leq z^{1/K}}
    \mspace{-8.0mu} H(a_{1},\dotsc,a_{r}) \\
    &\leq \bigg( \prod_{z^{1/K} < p \leq z} \:
    \sum_{\nu_{1},\dotsc,\nu_{r}} T(p^{\nu_{1}},\dotsc,p^{\nu_{r}}) \bigg)
    \sum_{P^{+}(a_{1} \dotsm a_{r}) \leq z^{1/K}}
    \mspace{-8.0mu} H(a_{1},\dotsc,a_{r})
    \text{.}
  \end{align*}
  To conclude we observe that by \eqref{eq:H2} the product above is
  \begin{align*}
    \leq \prod_{z^{1/K} < p \leq z} \Big( 1 + \frac{1}{p} \Big)^{O(1)} \leq K^{O(1)}
    \text{.}
  \end{align*}
\end{proof}

\begin{lemma}
  \label{lem:lemma4}
  Let $H$ be as in Lemma~\ref{lem:FisH}. We have, uniformly in $z > 0$,
  \begin{equation}
    \label{eq:lemma4}
    \sum_{P^{+}(n_{1} \dotsm n_{r}) \leq z} H(n_{1},\dotsc,n_{r})
    \asymp \sum_{n_{1} \dotsm n_{r} \leq z} H(n_{1},\dotsc,n_{r})
    \text{.}
  \end{equation}
\end{lemma}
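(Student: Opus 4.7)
The "$\gg$" direction is immediate: since $n_{1} \dotsm n_{r} \leq z$ implies $P^{+}(n_{1} \dotsm n_{r}) \leq n_{1} \dotsm n_{r} \leq z$, the set of tuples summed over on the right is a subset of that on the left, so $\sum_{n_{1} \dotsm n_{r} \leq z} H \leq \sum_{P^{+} \leq z} H$. The task is to establish the reverse inequality.

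For this I would use Rankin's trick. Write
\[
\sum_{P^{+}(n_{1} \dotsm n_{r}) \leq z} H = \sum_{n_{1} \dotsm n_{r} \leq z} H + S, \qquad S \coloneqq \sum_{\substack{P^{+}(n_{1} \dotsm n_{r}) \leq z \\ n_{1} \dotsm n_{r} > z}} H(n_{1},\dotsc,n_{r}).
\]
On the tail one has $1 \leq (n_{1} \dotsm n_{r}/z)^{\beta}$ for any $\beta > 0$, whence
\[
S \leq z^{-\beta} \sum_{P^{+}(n_{1} \dotsm n_{r}) \leq z} H(n_{1},\dotsc,n_{r}) (n_{1} \dotsm n_{r})^{\beta}.
\]
Choosing $\beta = \chi/\log z$ with a constant $\chi$ to be fixed (and requiring $z \geq e^{4g\chi}$ so that Lemma~\ref{lem:lemma2} applies), this gives $S \ll_{\chi} e^{-\chi} \sum_{P^{+} \leq z} H$. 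Fixing $\chi$ large enough (depending only on $g,\alpha,\delta,A,B$) that this implicit constant times $e^{-\chi}$ is at most $\tfrac{1}{2}$ and absorbing, we conclude $\sum_{P^{+} \leq z} H \leq 2 \sum_{n_{1} \dotsm n_{r} \leq z} H$ for $z \geq z_{0} \coloneqq e^{4g\chi}$.

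The case $z < z_{0}$ is handled separately: for $z < 1$ both sums are empty since $n_{1} \dotsm n_{r} \geq 1$, so the claim is trivial; for $1 \leq z < z_{0}$ the right-hand side is bounded below by the term $(n_{1},\dotsc,n_{r})=(1,\dotsc,1)$, which equals $H(1,\dotsc,1)=1$, while the left-hand side is bounded above by iterating \eqref{eq:H1} prime-by-prime and invoking \eqref{eq:H2}, yielding
\[
\sum_{P^{+} \leq z} H \leq \prod_{p \leq z} \bigg( 1 + \sideset{}{'}\sum_{\nu_{1},\dotsc,\nu_{r}} T(p^{\nu_{1}},\dotsc,p^{\nu_{r}}) \bigg) \leq \prod_{p \leq z_{0}} \big(1 + O(1/p)\big) \ll 1,
\]
which is a constant bounded by a function of $z_{0}$ (hence of our fixed parameters). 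The main technical point is the calibration of $\chi$ in the Rankin step: one must ensure that the implicit $\chi$-dependent constant in Lemma~\ref{lem:lemma2} is dominated by $e^{\chi}$, so that the absorption closes. This requires tracking the constants in the proof of Lemma~\ref{lem:lemma2} carefully, and is the crux of the argument.
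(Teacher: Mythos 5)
Your easy direction and your treatment of bounded $z$ are fine, but the main step has a genuine gap which you yourself flag and then do not close: applying Rankin's trick directly at smoothness level $z$ with $\beta=\chi/\log z$ gives $S\leq z^{-\beta}\sum_{P^{+}\leq z}H\cdot(n_{1}\dotsm n_{r})^{\beta}\ll_{\chi}e^{-\chi}\sum_{P^{+}\leq z}H$, and the absorption requires $C(\chi)e^{-\chi}<1$ for the implicit constant $C(\chi)$ of Lemma~\ref{lem:lemma2}. But $C(\chi)$ grows far faster than $e^{\chi}$: in the proof of Lemma~\ref{lem:lemma2} the bound $p^{\beta\nu_{h}}-1\ll_{\chi}\frac{\log p}{\log z}$ carries a factor of order $e^{2g\chi}$, so $\Delta_{2}\leq\exp\bigl(O(\chi e^{2g\chi})^{O(1)}\bigr)$; equivalently, the underlying sum $\sum_{p\leq z}p^{\beta-1}$ behaves like $\log\log z+e^{\chi}/\chi$, so the Rankin weight at level $z$ costs $\exp\bigl(O(e^{\chi})\bigr)$. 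No choice of large $\chi$ makes $C(\chi)e^{-\chi}$ small, so the argument as written does not close, and "tracking the constants carefully'' cannot rescue it.

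The paper's proof avoids this by a two-step absorption that you have not reproduced. First it applies Lemma~\ref{lem:lemma3} to pass from $P^{+}(n_{1}\dotsm n_{r})\leq z$ to $P^{+}(n_{1}\dotsm n_{r})\leq z^{1/K}$, at a cost of only $K^{L}$ with $L$ fixed. Only then does it apply Rankin with $\beta_{K}=K/\log z=1/\log(z^{1/K})$, i.e.\ Lemma~\ref{lem:lemma2} at the level $z^{1/K}$ with the \emph{fixed} parameter $\chi=1$, so the Lemma~\ref{lem:lemma2} constant is absolute while the Rankin saving is $z^{-\beta_{K}}=e^{-K}$. Since $K^{L}e^{-K}\to 0$, a suitable $K$ closes the absorption. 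To repair your proof you should insert this preliminary reduction of the smoothness level; the rest of your write-up (the trivial inequality and the bounded-$z$ case via \eqref{eq:H1} and \eqref{eq:H2}) then matches the paper.
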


\begin{proof}
  The lower bound is obvious.
  To prove the upper bound, we introduce a constant
  $K > 0$ whose value will be determined later.
  By Lemma~\ref{lem:lemma3}, there exists $L > 0$
  depending on the usual parameters such that
  \begin{equation}
    \label{eq:majolemma4}
    \begin{split}
      \sum_{P^{+}(n_{1} \dotsm n_{r}) \leq z} H(n_{1},\dotsc,n_{r})
      &\leq K^{L} \sum_{P^{+}(n_{1} \dotsm n_{r}) \leq z^{1/K}} H(n_{1},\dotsc,n_{r}) \\
      &\leq U + K^{L} \sum_{n_{1} \dotsm n_{r} \leq z} H(n_{1},\dotsc,n_{r})
    \end{split}
  \end{equation}
  where
  \begin{equation*}
    U = K^{L} \sum_{\substack{n_{1} \dotsm n_{r} > z \\ P^{+}(n_{1} \dotsm n_{r}) \leq z^{1/K}}}
    H(n_{1},\dotsc,n_{r}) \text{.}
  \end{equation*}
  We let $\beta_{K} = \frac{1}{\log (z^{1/K})} = \frac{K}{\log z}$. For $z \geq e^{4gK}$, we have
  
  \begin{align*}
    U &\leq K^{L} z^{-\beta_{K}} \sum_{P^{+}(n_{1} \dotsm n_{r}) \leq z^{1/K}}
    H(n_{1},\dotsc,n_{r}) (n_{1} \dotsm n_{r} )^{\beta_{K}} \\
    &\ll K^{L} e^{-K} \sum_{P^{+}(n_{1} \dotsm n_{r}) \leq z^{1/K}} H(n_{1},\dotsc,n_{r})
  \end{align*}
  where we have used Lemma~\ref{lem:lemma2} with $\chi = 1$
  in the second step.
  For a good choice of $K$ (depending on the usual parameters) we can thus impose
  \begin{equation*}
    U \leq \frac{1}{2} \sum_{P^{+}(n_{1} \dotsm n_{r}) \leq z} H(n_{1},\dotsc,n_{r})
    \text{.}
  \end{equation*}
  Inserting this back into \eqref{eq:majolemma4} yields the desired
  bound for $z$ large enough. When $z$ is bounded, so is the left-hand side
  of \eqref{eq:lemma4} by \eqref{eq:H1}, \eqref{eq:H2} and \eqref{eq:H3}.
  Since the right-hand side is superior to $H(1,\dotsc,1) = 1$,
  \eqref{eq:lemma4} still holds in this case.
\end{proof}

\section{Proof of Theorem~\ref{thm:mainbound}}
\label{sec:mainbound}

In this section we prove Theorem~\ref{thm:mainbound}, following
closely the proof of Theorem~1 in \cite{nairtenenbaum}
with occasional modifications to preserve the uniformity
in the discriminant.

We define
\begin{equation}
  \label{eq:epsilon123}
  \eps_{1} \coloneqq \frac{3}{25} \alpha, \quad
  \eps_{2} \coloneqq \frac{\eps_{1}}{3}, \quad
  \eps_{3} \coloneqq \frac{\eps_{1}}{6g} \text{.}
\end{equation}
Before proceeding to the proof of Theorem~\ref{thm:mainbound}
we establish the following sieve bound, which is
essential to our argument.

\begin{lemma}
  \label{lem:sievebound}
  Let $\Xi$ be the set of fixed prime divisors of $Q$.
  Assume $z \leq x^{\eps_{3}}$ and $a_{1} \dotsm a_{r} \leq x^{\eps_{1}}$. Then for $z$ large enough,
  \begin{equation}
    \label{eq:sievebound}
    \sum_{\substack{x < n \leq x + y \\ a_{h} || R_{h} (n) \; (1 \leq h \leq r) \\ p|Q(n) \Rightarrow p|a_{1} \dotsm a_{r} \\ \mathrm{or}\: p \in \Xi \:\mathrm{or}\: p > z}}
    1 \mspace{20.0mu} \asymp \mspace{20.0mu}
    y \frac{\hat{\rho}_{\mathbf{R}}(a_{1},\dotsc,a_{r})}{ [a_{1}\kappa(a_{1}),\dotsc,a_{r}\kappa(a_{r})] }
    \prod_{\substack{g < p \leq z \\ p \nmid a_{1} \dotsm a_{r} }} \Big( 1 - \frac{\rho(p)}{p} \Big) \text{.}
  \end{equation}
\end{lemma}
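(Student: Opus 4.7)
The plan is to combine a parametrization by residue classes with the fundamental lemma of the combinatorial sieve. Set $M = [a_1 \kappa(a_1),\dotsc,a_r \kappa(a_r)]$ and let $\mathcal{A}$ denote the set of integers $n \in (x,x+y]$ satisfying $a_h \| R_h(n)$ for $1 \leq h \leq r$. By the very definition of $\hat{\rho}_{\mathbf{R}}$ in \eqref{eq:rhoR}, $\mathcal{A}$ is the union of exactly $\hat{\rho}_{\mathbf{R}}(a_1,\dotsc,a_r)$ residue classes modulo $M$, hence
\begin{equation*}
  \#\mathcal{A} = y \cdot \frac{\hat{\rho}_{\mathbf{R}}(a_1,\dotsc,a_r)}{M} + O\bigl(\hat{\rho}_{\mathbf{R}}(a_1,\dotsc,a_r)\bigr).
\end{equation*}
Every prime dividing $M$ divides $a_1 \dotsm a_r$, so for any squarefree $d$ coprime to $a_1 \dotsm a_r$ the Chinese Remainder Theorem gives $\#\{n \in \mathcal{A} : d \mid Q(n)\} = (\#\mathcal{A})\,\rho(d)/d + O\bigl(\rho(d)\,\hat{\rho}_{\mathbf{R}}(a_1,\dotsc,a_r)\bigr)$.

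Next I would apply the fundamental lemma of the combinatorial sieve to $\mathcal{A}$, sifting by the set of primes $\mathcal{P} = \{p : p \notin \Xi,\; p \nmid a_1 \dotsm a_r\}$ up to the sifting limit $z$. The sieve has bounded dimension with local density $\rho(p)/p$ at each $p \in \mathcal{P}$, and its conclusion is that the sifted count has order
\begin{equation*}
  \#\mathcal{A} \cdot \prod_{\substack{p \in \mathcal{P}\\ p \leq z}} \Big(1 - \frac{\rho(p)}{p}\Big),
\end{equation*}
which is precisely the sifted quantity on the left-hand side of \eqref{eq:sievebound}. It then remains to reconcile the sieve product with the product appearing in the statement: by \eqref{eq:rhopbound}, any $p \in \Xi$ must satisfy $\rho(p) = p \leq g$, so $\Xi \subseteq \{p \leq g\}$; the finitely many primes $p \leq g$ contribute a factor that is bounded above and below by positive constants depending only on $g$, and may be absorbed into the implied constants of $\asymp$. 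This recovers the product over $g < p \leq z$, $p \nmid a_1 \dotsm a_r$, of $(1 - \rho(p)/p)$ featured on the right-hand side of \eqref{eq:sievebound}.

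The main obstacle, and the reason for the specific choice \eqref{eq:epsilon123}, will be to verify that the parameter constraints $a_1 \dotsm a_r \leq x^{\eps_1}$, $z \leq x^{\eps_3}$, $y \geq x^\alpha$ place us genuinely inside the range of validity of the fundamental lemma, so as to yield a two-sided asymptotic rather than a mere upper bound. Concretely, $M \leq (a_1 \dotsm a_r)^2 \leq x^{2\eps_1}$, so the natural level of distribution is $D \ll x^{\alpha - 2\eps_1}$ and the key ratio $s = \log D/\log z$ is at least $(\alpha - 2\eps_1)/\eps_3$, which by \eqref{eq:epsilon123} is a large multiple of the sieve dimension. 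Combined with the divisor-type bound $\rho(d) \leq g^{\omega(d)}$, this should control both the Chinese-remainder error $\sum_{d \leq D}\rho(d)$ and the $e^{-s}$-type factor intrinsic to the fundamental lemma, uniformly in $a_1,\dotsc,a_r$ and $z$, and thereby furnish the two-sided estimate $\asymp$ claimed in \eqref{eq:sievebound}.
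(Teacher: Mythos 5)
Your proposal is correct and follows essentially the same route as the paper: the paper likewise sets up the sifted set via the residue classes counted by $\hat{\rho}_{\mathbf{R}}$ modulo $[a_{1}\kappa(a_{1}),\dotsc,a_{r}\kappa(a_{r})]$, applies Brun's sieve (Halberstam--Richert, Theorem~2.1, a fundamental-lemma-type two-sided estimate) with sifting set $\{p \notin \Xi,\ p \nmid a_{1}\dotsm a_{r}\}$, dimension $\leq g$ and remainder $|R_{d}| \leq \hat{\rho}_{\mathbf{R}}(a_{1},\dotsc,a_{r})\rho(d)$, and uses exactly your level-of-distribution computation ($M \leq x^{2\eps_{1}}$, $z \leq x^{\eps_{3}}$, $y \geq x^{\alpha}$) to make the error negligible. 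The final reconciliation of the product, via the observation that fixed prime divisors satisfy $\rho(p)=p\leq g$ so that all primes $p\leq g$ can be absorbed into the implied constants, is also identical.
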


\begin{proof}
  We use Brun's sieve as exposed by Halberstam and Richert in
  \cite{sievemethods}, following their notations. We define a sequence
  \begin{equation*}
    \mathcal{A} \coloneqq \{ Q(n) : x < n \leq x + y \text{ such that } a_{h} || R_{h}(n) (1 \leq h \leq r) \}
  \end{equation*}
  and a sifting set of primes
  \begin{equation*}
    \mathcal{B} \coloneqq \{ p \notin \Xi \text{ such that } p \nmid a_{1} \dotsm a_{r} \} \text{.}
  \end{equation*}
  With these definitions the left-hand side of \eqref{eq:sievebound}
  is nothing more than $S(\mathcal{A},\mathcal{B},z)$.
  We have
  \begin{align*}
    X &= y \frac{\hat{\rho}_{\mathbf{R}}(a_{1},\dotsc,a_{r})}{ [a_{1}\kappa(a_{1}),\dotsc,a_{r}\kappa(a_{r})] } \\
    \intertext{and for $d$ squarefree with prime factors in $\mathcal{B}$,}
    \omega(d) &= \rho(d) \text{,} \\
    |R_{d}| &\leq \hat{\rho}_{\mathbf{R}}(a_{1},\dotsc,a_{r}) \rho(d)
    \text{.}
  \end{align*}
  We first check that $X \geq y / (a_{1} \dotsm a_{r})^{2} \geq x^{\alpha - 2\eps_{1}} \geq x^{\frac{19}{25}\alpha} > 1$.
  We also have $\omega(p) \leq g$ and
  \begin{equation*}
    0 \leq \frac{\omega(p)}{p} \leq 1 - \frac{1}{g+1}
  \end{equation*}
  for $p \in \mathcal{B}$, so that $(\Omega_{0})$ holds with $A_{0}=g$
  and $(\Omega_{1})$ holds with $A_{1}=g+1$.
  Lemma~2.2 p.52 of \cite{sievemethods} then implies that $(\Omega_{2}(\kappa))$
  holds with $\kappa = A_{0} = A_{2} = g$.
  The condition $(R)$ is also satisfied in its modified form
  \begin{equation*}
    |R_{d}| \leq L \omega(d)
  \end{equation*}
  with $L = \hat{\rho}_{R}(a_{1},\dotsc,a_{r}) \leq (a_{1} \dotsm a_{r})^{2} \leq x^{2\eps_{1}}$.
  We can therefore apply Theorem~2.1 p.57 of \cite{sievemethods}
  together with its Remark~2, with the choice of parameters
  $b = 1$ and $\lambda = \frac{1}{2e}$. This yields, for $z$ large enough (with respect
  to the $A_{i}$ and $\kappa$, that is with respect to $g$ in our setting),
  \begin{equation*}
    S(\mathcal{A},\mathcal{B},z) = v X W(z) + O( Lz^{24g} )
  \end{equation*}
  where $v \asymp 1$ and
  \begin{equation*}
    W(z) = \prod_{\substack{p \leq z \\ p \in \mathcal{B}}} \Big( 1 - \frac{\omega(p)}{p} \Big) \text{.}
  \end{equation*}
  We have $XW(z) \gg x^{\frac{19}{25}\alpha} (\log x)^{-g}$
  and $Lz^{24g} \ll x^{2\eps_{1} + 24g \eps_{3}} \ll XW(z) x^{-\alpha/5+\eta}$
  for any $\eta > 0$.
  Therefore, for $z$ large enough,
  \begin{equation*}
    S(\mathcal{A},\mathcal{B},z) \asymp X W(z) \text{.}
  \end{equation*}
  To observe that
  \begin{equation*}
    W(z) \asymp \prod_{\substack{g < p \leq x \\ p \nmid a_{1} \dotsm a_{r}}}
    \Big( 1 - \frac{\rho(p)}{p} \Big),
  \end{equation*}
  which stems from the fact that all fixed prime
  divisors $p$ of $Q$ are smaller than $g$.
\end{proof}

We now expose our proof of Theorem~\ref{thm:mainbound}.
Let $x < n \leq x+y$.
We write $Q^{*}(n) = p_{1}^{s_{1}} \dotsm p_{t}^{s_{t}}$
and define $a_{n} = p_{1}^{s_{1}} \dotsm p_{j}^{s_{j}}$ with $j$
maximal so that $a_{n} \leq x^{\eps_{1}}$. We let
$q_{n} = p_{j+1}$ whenever $j \neq t$, else
we let $q_{n}=+\infty$.
We thus have a decomposition
\begin{equation*}
  Q^{*}(n) = a_{n}b_{n}
\end{equation*}
with $P^{+}(a_{n}) < q_{n}$ and $P^{-}(b_{n}) \geq q_{n}$.
Accordingly we decompose the $R_{h}(n)$, ${1 \leq h \leq r}$, in
\begin{equation*}
  R_{h}(n) = a_{hn}b_{hn}
\end{equation*}
with $P^{+}(a_{hn}) < q_{n}$ and $P^{-}(b_{hn}) \geq q_{n}$.
It follows from the definitions above that $a_{n} \leq x^{\eps_{1}}$, $q_{n}=P^{-}(b_{n})$,
$a_{n}||Q(n)$, $a_{hn}||R_{h}(n)$, $a_{n}=a_{1n} \dotsm a_{rn}$ and
$b_{n}=b_{1n} \dotsm b_{rn}$.

We will distinguish five potentially overlapping
classes of integers $x < n \leq x+y$ as follows :

\begin{description}
\item[$(C_1)$] $a_{n} \leq x^{\eps_{1}}$, $P^{-}(b_{n}) > x^{\eps_{3}},$
\item[$(C_2)$] $a_{n} \leq x^{\eps_{2}} ,\;
P^{-}(b_{n}) \leq x^{\eps_{3}} ,\; b_{n} \neq 1,$
\item[$(C_3)$] $x^{\eps_{2}} < a_{n} \leq x^{\eps_{1}} ,\;
\omega < P^{+}(a_{n}) \leq x^{\eps_{3}}$
\item[$(C_4)$] $x^{\eps_{2}} < a_{n} \leq x^{\eps_{1}} ,\;
P^{+}(a_{n}) \leq \omega,$
\item[$(C_5)$] $a_{n} \leq x^{\eps_{2}},\; b_{n} = 1,$
\end{description}
where $\omega$ is a parameter to be chosen later.

For $1 \leq i \leq 5$ we let
\begin{equation*}
  S_{i} = \sum_{n \in (C_{i})} F(|Q_{1}(n)|,\dotsc,|Q_{k}(n)|)
  = \sum_{n \in (C_{i})} \tilde{F}(|R_{1}(n)|,\dotsc,|R_{r}(n)|),
\end{equation*}
the second equality coming from \eqref{eq:switchtoFtilde}.

\textit{Contribution of integers $n \in C_{1}$,
  for which $a_{n} \leq x^{\eps_{1}}$ and $P^{-}(b_{n}) > x^{\eps_{3}}$.}

Since $b_{n} \geq P^{-}(b_{n})^{\Omega(b_{n)}}$ and $\| Q \| \leq x^{\frac{1}{\delta}}$, we have
\begin{equation*}
  \Omega(b_{n}) \leq \frac{\log b_{n}}{\log P^{-}(b_{n})}
  \leq \frac{\log |Q(n)|}{\log P^{-}(b_{n})}
  \leq \Big(g+\frac{1}{\delta}\Big) \frac{1}{\eps_{3}} \text{.}
\end{equation*}
Therefore by \eqref{eq:G1} we have
\begin{equation*}
  \tilde{G}(b_{1n},\dotsc,b_{rn}) \leq A^{g\Omega(b_{n})} \ll 1 \text{.}
\end{equation*}
By \eqref{eq:Ftildesubm} we then obtain that
\begin{align*}
  S_{1} \ll \sum_{a_{1} \dotsm a_{r} \leq x^{\eps_{1}}}
  \tilde{F}(a_{1},\dotsc,a_{r})
  \sum_{\substack{x < n \leq x + y \\ a_{h} || R_{h} (n) (1 \leq h \leq r) \\ p|Q(n) \Rightarrow p|a_{1} \dotsm a_{r} \:\mathrm{or}\: p > x^{\eps_{3}}}} 1
  \text{.}
\end{align*}
Applying Lemma~\ref{lem:sievebound} to bound the inner sum
we obtain
\begin{align*}
  S_{1} \ll y \sum_{a_{1} \dotsm a_{r} \leq x}
  \tilde{F}(a_{1},\dotsc,a_{r})
  \frac{\hat{\rho}_{\mathbf{R}}(a_{1},\dotsc,a_{r})}{[a_{1}\kappa(a_{1}),\dotsc,a_{r}\kappa(a_{r})]}
  \prod_{\substack{g < p \leq x^{\eps_{3}} \\ p \nmid a_{1} \dotsm a_{r} }}
  \Big( 1 - \frac{\rho(p)}{p} \Big) \text{.}
\end{align*}
The inner product is, by \eqref{eq:rhopbound},
\begin{align*}
  &\ll \prod_{h=1}^{r} \prod_{p|a_{h}} \Big( 1 - \frac{1}{p} \Big)^{-g}
  \prod_{x^{\eps_{3}} < p \leq x} \Big( 1 - \frac{1}{p} \Big)^{-g}
  \prod_{g < p \leq x} \Big( 1 - \frac{\rho(p)}{p} \Big) \\
  &\ll \lambda(a_{1}) \dotsm \lambda(a_{r})
  \prod_{g < p \leq x} \Big( 1 - \frac{\rho(p)}{p} \Big)
\end{align*}
where $\lambda(n)=(\frac{n}{\varphi(n)})^{g}$. We deduce that
\begin{equation*}
  S_{1} \ll y \prod_{g < p \leq x} \Big( 1 - \frac{\rho(p)}{p} \Big)
  \sum_{a_{1} \dotsm a_{r} \leq x}
  \tilde{F}(a_{1},\dotsc,a_{r})
  \frac{\hat{\rho}_{\mathbf{R}}(a_{1},\dotsc,a_{r})}{[a_{1}\kappa(a_{1}),\dotsc,a_{r}\kappa(a_{r})]}
  \lambda(a_{1}) \dotsm \lambda(a_{r}) \text{.}
\end{equation*}
Applying Lemmas~\ref{lem:FisH},~\ref{lem:lemma1}
with $\sigma_{h} = 1$, $\theta_{h}=\lambda$ ($1 \leq h \leq r$) to the sum over
the $a_{i}$ in the above we see that $S_{1}$ is of the
expected order of magnitude.

\textit{Contribution of integers $n \in C_{2}$,
  for which $a_{n} \leq x^{\eps_{2}}$, $P^{-}(b_{n}) \leq x^{\eps_{3}}$ and
  $b_{n} \neq 1$.}

Let $q_{n} = P^{-}(b_{n})$. By definition of $a_{n}$ we
have $a_{n}q_{n}^{e_{n}} > x^{\eps_{1}}$ for some $e_{n} \geq 1$. For this
$e_{n}$ we have $q_{n}^{e_{n}} > x^{\eps_{1}-\eps_{2}} = x^{2\eps_{2}}$. We introduce
the minimal integer $f_{n}$ such that $q_{n}^{f_{n}} > x^{2\eps_{2}}$.
Since $q_{n}^{f_{n}-1} \leq x^{2\eps_{2}}$, $q_{n}^{f_{n}} \leq x^{2\eps_{2}+\eps_{3}}$ and in
particular $f_{n} \leq \log x$ and $q_{n}^{f_{n}} \leq y$.

By \eqref{eq:Fsubm} and our assumption $\|Q\| \leq x^{\frac{1}{\delta}}$ we have
\begin{equation}
  \label{eq:Ftrivialbound}
  F(|Q_{1}(n)|,\dotsc,|Q_{k}(n)|) \leq B |Q(n)|^{\eps} \ll x^{(g+\frac{1}{\delta})\eps} \text{.}
\end{equation}
This allows us to bound $S_{2}$ by
\begin{align*}
  S_{2} \ll x^{(g+\frac{1}{\delta})\eps} \sum_{f \leq \log x}
  \sum_{\substack{q \leq x^{\eps_{3}} \\ x^{2\eps_{2}} < q^{f} \leq y}}
  \sum_{\substack{x < n \leq x+y \\ q^{f} | Q^{*}(n)}} 1 \;\text{.}
\end{align*}
The innerest sum is
\begin{equation*}
  \leq \rho^{*}(q^{f}) \Big( \frac{y}{q^{f}} + 1 \Big)
  \ll y \frac{\rho^{*}(q^{f})}{q^{f}} \ll y q^{-\frac{f}{g}}
\end{equation*}
by \eqref{eq:rhostewartbound}. Therefore
\begin{align*}
  S_{2} \ll y x^{(g+\frac{1}{\delta})\eps} \sum_{f \leq \log x}
  \sum_{\substack{q \leq x^{\eps_{3}} \\ x^{-2\eps_{2}} < q^{f}}} q^{-\frac{f}{g}}
  \ll y x^{(g+\frac{1}{\delta})\eps + \eps_{3} - 2\frac{\eps_{2}}{g}} \ll y x^{-c} \log x
\end{align*}
with $c = \frac{\alpha}{25g}$. This is readily seen to be lower than the expected order of
magnitude since the right-hand side of \eqref{eq:mainbound}
is $\gg y (\log x)^{-g}$. This last fact follows from our
assumption $F(1,\dotsc,1)=1$ and \eqref{eq:rhopbound}.

\textit{Contribution of integers $n \in C_{3}$,
  for which $x^{\eps_{2}} < a_{n} \leq x^{\eps_{1}}$ and
$\omega < P^{+}(a_{n}) \leq x^{\eps_{3}}$.}

We define $\ell_{n} \coloneqq P^{+}(a_{n})$.
We write $a_{n} = \ell_{n}^{\nu_{n}} c_{n}$ with $\ell_{n} \nmid c_{n}$ and $a_{nh}=\ell_{n}^{\nu_{hn}}c_{hn}$
with $\ell_{n} \nmid c_{hn}$ ($1 \leq h \leq r$).
By \eqref{eq:G3}, we have
\begin{equation*}
  \tilde{G}(\ell_{n}^{\nu_{1n}},\dotsc,\ell_{n}^{\nu_{rn}}) \leq D(\ell_{n}^{\nu_{n}})
\end{equation*}
where $D$ is the multiplicative function defined by
\begin{align*}
  &\phantom{(\nu \geq 1)} & D(p^{\nu}) &= \min(A^{g\nu},Bp^{g\eps\nu}) & &(\nu \geq 1)
\end{align*}
for primes $p$. We also have, as in the case of the class $(C_{2})$,
\begin{equation*}
  \Omega(b_{n}) \leq (g+\frac{1}{\delta}) \frac{\log x}{\log \ell_{n}}
\end{equation*}
and therefore, upon using \eqref{eq:G1},
\begin{equation*}
  \tilde{G}(b_{1n},\dotsc,b_{rn}) \leq A^{g\Omega(b_{n})} \leq e^{Lu_{n}}
\end{equation*}
with $u_{n} \coloneqq \frac{\log x}{\log \ell_{n}}$ and $L \coloneqq g (g + \frac{1}{\delta}) \log A$.
Note that $L$ depends on the usual parameters.

Applying \eqref{eq:Ftildesubm}, we then obtain
\begin{equation*}
  S_{3} \ll \sum_{\substack{\ell^{\nu} \leq x^{\eps_{1}} \\ w < \ell \leq x^{\eps_{3}}}} D(\ell^{\nu}) e^{Lu}
  \sum_{\substack{ \frac{x^{\eps_{2}}}{\ell^{\nu}} \leq c_{1} \dotsm c_{r} \leq \frac{x^{\eps_{1}}}{\ell^{\nu}} \\
    P^{+}(c_{1} \dotsm c_{r}) < \ell }} 
  \tilde{F}(c_{1},\dotsc,c_{r})
  \sum_{\substack{x < n \leq x + y \\ c_{h} || R_{h} (n) (1 \leq h \leq r)\\ \ell^{\nu} | Q^{*}(n) \\ p |Q(n) \Rightarrow p| \ell c_{1} \dotsm c_{r} \:\mathrm{or}\: p > x^{\eps_{3}}}} 1
\end{equation*}
where $u=\frac{\log x}{\log \ell}$. Now Lemma~\ref{lem:sievebound} can easily be modified to
bound the inner sum above, the new condition $\ell^{\nu}|Q^{*}(n)$
changing the right-hand side of \eqref{eq:sievebound} upto a factor $\frac{\rho^{*}(\ell^{\nu})}{\ell^{\nu}}$.
We thereby obtain
\begin{multline*}
  S_{3} \ll y \sum_{w < \ell \leq x} \sum_{\nu}
  D(\ell^{\nu}) \frac{\rho^{*}(\ell^{\nu})}{\ell^{\nu}} e^{Lu} \\
  \times \sum_{\substack{ \frac{x^{\eps_{2}}}{\ell^{\nu}} \leq c_{1} \dotsm c_{r} \\
    P^{+}(c_{1} \dotsm c_{r}) < \ell }}
  \tilde{F}(c_{1},\dotsc,c_{r}) \frac{\hat{\rho}_{\mathbf{R}}(c_{1},\dotsc,c_{r})}{[c_{1}\kappa(c_{1}),\dotsc,c_{r}\kappa(c_{r})]}
  \prod_{\substack{g < p \leq q \\ p \nmid \ell c_{1} \dotsm c_{r} }} \Big( 1 - \frac{\rho(p)}{p} \Big) \text{.}
\end{multline*}
The inner product is, by \eqref{eq:rhopbound},
\begin{align*}
  &\ll \prod_{h=1}^{r} \prod_{p|c_{h}} \Big( 1 - \frac{1}{p} \Big)^{-g}
  \prod_{q < p \leq x} \Big( 1 - \frac{1}{p} \Big)^{-g}
  \prod_{g < p \leq x} \Big( 1 - \frac{\rho(p)}{p} \Big) \\
  &\ll \lambda(c_{1}) \dotsm \lambda(c_{r}) u^{g}
  \prod_{g < p \leq x} \Big( 1 - \frac{\rho(p)}{p} \Big)
\end{align*}
where $\lambda(n)=(\frac{n}{\varphi(n)})^{g}$.
Letting $\chi=\frac{3L}{\eps_{2}}$ and $\beta = \frac{\chi}{\log \ell}$, we therefore have
\begin{multline}
  \label{eq:S3beforelemmas}
    S_{3} \ll y \prod_{g < p \leq x} \Big( 1 - \frac{\rho(p)}{p} \Big)
    \sum_{w < \ell \leq x} \sum_{\nu}
    D(\ell^{\nu}) \frac{\rho^{*}(\ell^{\nu})}{\ell^{\nu}}
    e^{Lu} u^{g} \Big(\frac{x^{\eps_{2}}}{\ell^{\nu}}\Big)^{-\beta} \\
    \times \sum_{P^{+}(c_{1} \dotsm c_{r}) < \ell}
    F(c_{1},\dotsc,c_{r}) \frac{\hat{\rho}_{\mathbf{R}}(c_{1},\dotsc,c_{r})}{[c_{1}\kappa(c_{1}),\dotsc,c_{r}\kappa(c_{r})]}
    \lambda(c_{1}) \dotsm \lambda(c_{r})
    (c_{1} \dotsm c_{r})^{\beta} \text{.}
\end{multline}
We now remark that
\begin{equation*}
  e^{Lu}u^{g} \Big(\frac{x^{\eps_{2}}}{\ell^{\nu}}\Big)^{-\beta} =
  e^{-2Lu} u^{g} e^{\nu\chi} \ll e^{-Lu} e^{\nu\chi},
\end{equation*}
and we apply Lemmas~\ref{lem:FisH},
\ref{lem:lemma2}, \ref{lem:lemma4} with $\sigma_{h} = \lambda$ ($1 \leq h \leq r$)
to the sum over the $c_{i}$ in \eqref{eq:S3beforelemmas} to obtain
\begin{equation}
  \label{eq:S3}
  \begin{split}
    S_{3} \ll y \prod_{g < p \leq x} \Big( 1 - &\frac{\rho(p)}{p} \Big)
    \bigg( \sum_{w < \ell \leq x} e^{-Lu}
    \sum_{\nu} e^{\nu\chi} D(\ell^{\nu})
    \frac{\rho^{*}(\ell^{\nu})}{\ell^{\nu}} \bigg) \\
    & \times \sum_{c_{1} \dotsm c_{r} \leq x}
    \tilde{F}(c_{1},\dotsc,c_{r})
    \frac{\hat{\rho}_{\mathbf{R}}(c_{1},\dotsc,c_{r})}{[c_{1}\kappa(c_{1}),\dotsc,c_{r}\kappa(c_{r})]}
    \lambda(c_{1}) \dotsm \lambda(c_{r})
    \text{.}
  \end{split}
\end{equation}
Using \eqref{eq:rhotrivialbound} and \eqref{eq:rhostewartbound}
we see that, taking $\omega = e^{2g\chi}$,
\begin{equation*}
  \sum_{\nu} e^{\nu\chi} D(\ell^{\nu}) \frac{\rho^{*}(\ell^{\nu})}{\ell^{\nu}}
  \ll \frac{1}{\ell} + \sum_{\nu > 2g} e^{\nu\chi} \ell^{g\eps\nu} \ell^{-\frac{\nu}{g}}
  \ll \frac{1}{\ell} \text{.}
\end{equation*}
Also by integration by parts we have
\begin{equation*}
  \sum_{\ell \leq x} \frac{e^{-Lu}}{\ell} \ll 1 \text{.}
\end{equation*}
The sum over $\ell$ in \eqref{eq:S3} is therefore bounded.
Applying Lemmas~\ref{lem:FisH}, \ref{lem:lemma1} with $\sigma_{h}=1$,
$\theta_{h}=\lambda$ ($1 \leq h \leq r$) to the sum
over the $c_{i}$ in \eqref{eq:S3} we thus see that
\eqref{eq:S3} is compatible with \eqref{eq:mainbound}.

\textit{Contribution of integers $n \in C_{4}$,
  for which $x^{\eps_{2}} < a_{n} \leq x^{\eps_{1}}$ and
$P^{+}(a_{n}) \leq \omega$.}

We use the trivial bound \eqref{eq:Ftrivialbound} to obtain
\begin{align}
  \notag
  S_{4} &\ll x^{(g+\frac{1}{\delta})\eps}
  \sum_{\substack{x^{\eps_{2}} < a \leq x^{\eps_{1}} \\ P^{+}(a) \leq \omega}}
  \sum_{\substack{x < n \leq x+y \\ a | Q^{*}(n)}} 1 \\
  \label{eq:S4}
  &\ll y x^{(g+\frac{1}{\delta})\eps}
  \sum_{\substack{x^{\eps_{2}} < a \leq x^{\eps_{1}} \\ P^{+}(a) \leq \omega}}
  \frac{\rho^{*}(a)}{a}
  \text{.}
\end{align}

For integers $a$ such that $P^{+}(a) \leq \omega$
we have $\omega(a) \leq \pi(\omega) \ll 1$ and
hence, by \eqref{eq:rhostewartbound},
$\rho^{*}(a) \leq g^{\omega(a)}a^{1-\frac{1}{g}} \ll a^{1-\frac{1}{g}}$.
Inserting this bound in \eqref{eq:S4} we obtain
\begin{equation*}
  S_{4} \ll y x^{(g+\frac{1}{\delta})\eps}
  \sum_{\substack{x^{\eps_{2}} < a \leq x^{\eps_{1}} \\ P^{+}(a) \leq \omega}} a^{-\frac{1}{g}}
  \ll y x^{(g+\frac{1}{\delta})\eps - \frac{\eps_{2}}{g}} (\log x)^{\omega}
  \ll y x^{-c} (\log x)^{\omega}
\end{equation*}
with $c=\frac{\alpha}{25g}$. This is compatible with \eqref{eq:mainbound} as argued in the case
of integers $n \in (C_{2})$.

\textit{Contribution of integers $n \in C_{5}$,
  for which $a_{n} \leq x^{\eps_{2}}$ and
$b_{n} = 1$.}

We use the trival bound \eqref{eq:Ftrivialbound} to obtain
\begin{equation*}
  S_{5} \ll x^{(g+\frac{1}{\delta})\eps} \sum_{a \leq x^{\eps_{2}}} \; \sum_{\substack{x < n \leq x+y \\ Q^{*}(n) = a}} 1
  \ll x^{(g+\frac{1}{\delta})\eps + \eps_{2}} \ll y x^{-c} \text{.}
\end{equation*}
with $c=-\frac{47}{50}\alpha$. This is compatible with \eqref{eq:mainbound} as argued in the case
of integers $n \in (C_{2})$.

\section{Proof of Corollaries \ref{cor:discbound} and \ref{cor:Fmultbound}}
\label{sec:corollaries}

To derive Corollaries~\ref{cor:discbound} and \ref{cor:Fmultbound}
from Theorem~\ref{thm:mainbound} we focus on the sum
\begin{equation*}
  \tilde{S} = \sum_{n_{1} \dotsm n_{r} \leq x} \tilde{F}(n_{1},\dotsc,n_{r})
  \frac{ \hat{\rho}_{\mathbf{R}}(n_{1},\dotsc,n_{r}) }{ [n_{1}\kappa(n_{1}),\dotsc,n_{r}\kappa(n_{r})] }
\end{equation*}
appearing in the right-hand side of \eqref{eq:mainbound}.
We shall establish upper bounds for $\tilde{S}$ as well as lower bounds
that we need for the proof of Theorem~\ref{thm:lowerbound}.
Corollary~\ref{cor:discbound} is
a direct consequence of the following Lemma.

\begin{lemma}
  \label{lem:littlediscbound}
  We have
  \begin{equation}
    \label{eq:littlediscbound}
    \tilde{S} \ll \Delta_{D^{*}}
    \sum_{\substack{ a_{1} \dotsm a_{r} \leq x  \\ (a_{1} \dotsm a_{r},D^{*}) =1 \\ (a_{i},a_{j}) = 1 \: (i \neq j)}}
    \tilde{F}(a_{1},\dotsc,a_{r})
    \frac{\rho_{R_{1}}(a_{1}) \dotsm \rho_{R_{r}}(a_{r})}{a_{1} \dotsm a_{r}}.
  \end{equation}
\end{lemma}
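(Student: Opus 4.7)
The plan is to split the summation variables according to whether their prime factors divide $D^*$, invoke the submultiplicativity \eqref{eq:Ftildesubm} through the minimal function $\tilde G$, and exploit the multiplicativity of $\hat\rho_{\mathbf{R}}$. Writing each $n_h = m_h b_h$ with $m_h\mid (D^*)^\infty$ and $(b_h,D^*)=1$, the prime supports of $(m_h)$ and $(b_h)$ are disjoint, so the core summand factors as an $(m_h)$-piece times a $(b_h)$-piece. Combined with $\tilde F(m_1b_1,\ldots,m_rb_r)\leq \tilde G(m_1,\ldots,m_r)\tilde F(b_1,\ldots,b_r)$ and discarding the size constraint on the $(m_h)$-part (all summands being nonnegative), this yields
\[
\tilde S \;\leq\; \mathcal E\cdot \tilde S_{\perp D^*},
\]
where $\mathcal E$ is the unrestricted sum over $m_h\mid(D^*)^\infty$ with summand $\tilde G(m_1,\ldots,m_r)\hat\rho_{\mathbf{R}}(m_1,\ldots,m_r)/[m_1\kappa(m_1),\ldots,m_r\kappa(m_r)]$, and $\tilde S_{\perp D^*}$ is the $(b_h)$-sum obtained from $\tilde S$ by imposing $(b_h,D^*)=1$.

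For $\tilde S_{\perp D^*}$ I would use that when $p\nmid D^*$ the polynomials $R_1,\ldots,R_r$ share no common root modulo $p$: any shared root would force $p$ to divide a resultant $\Res(R_h,R_{h'})$ and hence $D^*$. Consequently at most one $b_h$ can be divisible by any given $p$, and the surviving tuples are automatically pairwise coprime. A prime-by-prime calculation---using Hensel's lemma to conclude $\rho_{R_h}(p^\nu)=\rho_{R_h}(p)$ for $p\nmid D^*$ and $\nu\geq 1$---then gives
\[
\frac{\hat\rho_{\mathbf{R}}(b_1,\ldots,b_r)}{[b_1\kappa(b_1),\ldots,b_r\kappa(b_r)]}\;\leq\;\prod_{h=1}^{r}\frac{\rho_{R_h}(b_h)}{b_h},
\]
producing exactly the sum on the right-hand side of \eqref{eq:littlediscbound}.

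It remains to show $\mathcal E\ll \Delta_{D^*}$. Since the summand of $\mathcal E$ is multiplicative, $\mathcal E = \prod_{p\mid D^*} E_p$ with $E_p = 1 + \sideset{}{'}\sum T(p^{\nu_1},\ldots,p^{\nu_r})$ in the notation of Lemma~\ref{lem:FisH} (with $\sigma_h\equiv 1$). Splitting the inner sum according to whether $\nu_h\leq \deg R_h$ for every $h$, the truncated piece reproduces exactly the $p$-factor of $\Delta_{D^*}$, while the overflow $R_p$ can be bounded through Stewart's inequalities \eqref{eq:rhostewartbound}--\eqref{eq:rhohstewartbound}, the submultiplicative bound \eqref{eq:G3}, the tail estimate \eqref{eq:H3}, and the smallness of $\eps$ afforded by the hypothesis $\eps<\alpha/(50g(g+1/\delta))$. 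The outcome is $R_p\ll p^{-1-c}$ for some absolute $c>0$, so $\prod_{p\mid D^*}(1+R_p/\Delta_{D^*,p})\ll 1$ and $\mathcal E\ll\Delta_{D^*}$. The main obstacle is precisely this last Euler-factor comparison: one needs $R_p$ to decay strictly faster than $1/p$, since the crude bound $R_p\ll 1/p$ supplied directly by \eqref{eq:H2} would allow $\sum_{p\mid D^*} 1/p$ to blow up in $D^*$. This is where Stewart's individual-factor estimate \eqref{eq:rhohstewartbound}---applied to the single-positive-exponent part of the tail, rather than the coarser $\rho^*$ bound---becomes essential.
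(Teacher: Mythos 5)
Your proposal is correct and follows essentially the same route as the paper: the same decomposition of each variable into its $D^{*}$-smooth and $D^{*}$-coprime parts, the same use of \eqref{eq:Ftildesubm} and the multiplicativity of $\hat{\rho}_{\mathbf{R}}$ to factor off an Euler product over $p\mid D^{*}$, the same resultant argument forcing pairwise coprimality of the $D^{*}$-coprime parts, and the same comparison of that Euler product with $\Delta_{D^{*}}$ via Stewart's bounds to get the overflow terms down to $O(p^{-1-c})$. Your closing remark about why the crude $O(1/p)$ bound from \eqref{eq:H2} is insufficient is exactly the point of the paper's treatment of $\Delta_{4}$.
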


\begin{proof}
  For all $1 \leq h \leq r$, we write $n_{h}=d_{h}a_{h}$
  with $d_{h}|D^{*\infty}$ and $(a_{h},D^{*})=1$.
  By \eqref{eq:Ftildesubm} and the submultiplicativity of $\tilde{G}$,
  we then have
  \begin{equation}
    \label{eq:prediscbound}
    \tilde{S} \ll \Delta_{4}
    \sum_{\substack{a_{1} \dotsm a_{r} \leq x \\ (a_{1} \dotsm a_{r},D^{*})=1}}
    \tilde{F}(a_{1},\dotsc,a_{r})
    \frac{ \hat{\rho}_{\mathbf{R}}(a_{1},\dotsc,a_{r}) }{ [a_{1}\kappa(a_{1}),\dotsc,a_{r}\kappa(a_{r})] }
  \end{equation}
  where
  \begin{equation}
    \label{eq:delta4}
    \Delta_{4} = \prod_{p | D^{*}} \bigg( 1 +
    \sideset{}{'}\sum_{\nu_{1},\dotsc,\nu_{r}}
    \tilde{G}(p^{\nu_{1}},\dotsc,p^{\nu_{r}})
    \frac{ \hat{\rho}_{\mathbf{R}}(p^{\nu_{1}},\dotsc,p^{\nu_{r}}) }{ p^{\max_{h}(\nu_{h})+1} }
    \bigg) \text{.}
  \end{equation}
  Now let $1 \leq m \leq r$ and define $\mu_{m}=\deg(R_{m})$.
  By the definition \eqref{eq:rhoR} of $\hat{\rho}_{\mathbf{R}}$ and \eqref{eq:rhohstewartbound},
  we have
  \begin{equation*}
    \frac{ \hat{\rho}_{\mathbf{R}}(p^{\nu_{1}},\dotsc,p^{\nu_{r}}) }{ p^{\max_{h}(\nu_{h})} }
    \leq \frac{\rho_{R_{m}}(p^{\nu_{m}})}{p^{\nu_{m}}} \leq \mu_{m} p^{-\nu_{m}/\mu_{m}} \text{.}
  \end{equation*}
  Using this bound and \eqref{eq:G1}, we obtain
  \begin{equation*}
    \sum_{\substack{\nu_{m} > \mu_{m} \\ \nu_{1}+\dotsb+\nu_{r} \leq 2g}} \tilde{G}(p^{\nu_{1}},\dotsc,p^{\nu_{r}})
    \frac{ \hat{\rho}_{\mathbf{R}}(p^{\nu_{1}},\dotsc,p^{\nu_{r}}) }{ p^{\max_{h}(\nu_{h})+1} }
    \ll \sum_{\nu > \mu_{m}} p^{-\nu/\mu_{m}} \ll \frac{1}{p^{1+1/\nu_{m}}} \text{.}
  \end{equation*}
  Since this is true for all $1 \leq m \leq r$ and since a
  similar bound holds for the tail $\sum_{\nu_{1}+\dotsb+\nu_{r}>2g}$ by \eqref{eq:H3},
  it follows that $\Delta_{4} \asymp \Delta_{D^{*}}$, where $\Delta_{D^{*}}$ is defined
  by \eqref{eq:deltaDstar}.
  
  It remains to rewrite the sum over the $a_{i}$ in \eqref{eq:prediscbound}.
  To this end we use certain algebraic facts about the discriminant
  and the resultant, the proof of which can be found in e.g. \cite{lang}.
  For $h \neq i$ there exists polynomials $U$, $V$ in $\Z[X]$
  such that
  \begin{equation}
    \label{eq:resultant}
    R_{h}(X)U(X) + R_{i}(X)V(X) = \Res(R_{h},R_{i})
  \end{equation}
  where $\Res(R_{h},R_{i})$ is the resultant of $R_{h}$ and $R_{i}$.
  When $\hat{\rho}_{\mathbf{R}}(a_{1},\dotsc,a_{r})$ is non-zero there exists
  an integer $n$ such that $a_{i}|R_{i}(n)$ and $a_{j}|R_{j}(n)$.
  Taking $X=n$ in \eqref{eq:resultant} we then see that $(a_{i},a_{h})|\Res(R_{h},R_{i})$.
  Since $\Disc(R_{h}R_{i})=\Res(R_{h},R_{i})^{2}\Disc(R_{h})\Disc(R_{i})$
  and $\Disc(R_{h}R_{i}) | \Disc(Q^{*}) = D^{*}$, we have
  that $\Res(R_{h},R_{i}) | D^{*}$.
  Therefore $(a_{i},a_{h})|D^{*}$, and since the $a_{j}$ are coprime to
  $D^{*}$ we have further $(a_{i},a_{h})=1$.
  We deduce that $\hat{\rho}_{\mathbf{R}}(a_{1},\dotsc,a_{r})$ is zero
  unless the $a_{i}$ are mutually coprime in which case
  we have, by multiplicativity of $\hat{\rho}_{\mathbf{R}}$,
  \begin{align*}
    \hat{\rho}_{\mathbf{R}}(a_{1},\dotsc,a_{r})
    &= \prod_{h=1}^{r} \hat{\rho}_{\mathbf{R}}^{(h)}(a_{h}) \\
    &= \prod_{h=1}^{r} \prod_{p^{\nu} || a_{h}}
    \big( \rho_{R_{h}}(p^{\nu_{h}})p - \rho_{R_{h}}(p^{\nu_{h}+1}) \big) \\
    &\leq \prod_{h=1}^{r} \rho_{R_{h}}(a_{h}) \kappa(a_{h}) \text{.}
  \end{align*}
  Inserting this back in \eqref{eq:prediscbound} we recover
  \eqref{eq:littlediscbound}.
\end{proof}

Corollary~\ref{cor:Fmultbound} is obtained in a similar fashion,
by applying the following Lemma.

\begin{lemma}
  \label{lem:littleFmultbound}
  We have
  \begin{equation*}
    \tilde{S} \ll \Delta_{D^{*}}
    \prod_{\substack{p \leq x \\ p \nmid D^{*}}} \prod_{h = 1}^{r}
    \Big( 1 + \tilde{G}^{(h)}(p) \frac{\rho_{R_{h}}(p)}{p} \Big) \text{.}
  \end{equation*}
  The right-hand side in the above is also a lower bound for $\tilde{S}$
  when $F$ is assumed to be multiplicative.
\end{lemma}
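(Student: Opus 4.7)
The plan is to build on Lemma~\ref{lem:littlediscbound}, which already isolates the factor $\Delta_{D^{*}}$, and to convert the remaining sum over pairwise coprime tuples $(a_{1},\dotsc,a_{r})$ into the claimed Euler product over $p\nmid D^{*}$. For the upper bound, I would first apply $\tilde{F}(a_{1},\dotsc,a_{r})\leq \tilde{G}(a_{1},\dotsc,a_{r})$ (which follows from the normalisation $F(1,\dotsc,1)=1$), and then combine the submultiplicativity of $\tilde{G}$ with the pairwise coprimality of the $a_{h}$ to obtain the diagonal bound $\tilde{G}(a_{1},\dotsc,a_{r})\leq \prod_{h}\tilde{G}^{(h)}(a_{h})$. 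Dropping the constraint $a_{1}\dotsm a_{r}\leq x$ is legitimate for an upper bound since every summand is non-negative. Using that each prime $p$ divides at most one of the $a_{h}$, the resulting sum factors as the Euler product
\begin{equation*}
  \prod_{p\nmid D^{*}}\bigg(1+\sum_{h=1}^{r}\sum_{\nu\geq 1}\tilde{G}^{(h)}(p^{\nu})\frac{\rho_{R_{h}}(p^{\nu})}{p^{\nu}}\bigg).
\end{equation*}

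I would then simplify each local factor by observing that $p\nmid D^{*}$ implies $p\nmid \Disc(R_{h})$ (since $\Disc(R_{h})\mid D^{*}$), so Hensel's lemma gives $\rho_{R_{h}}(p^{\nu})=\rho_{R_{h}}(p)$ for all $\nu\geq 1$. The $\nu=1$ contribution equals $\sum_{h}\tilde{G}^{(h)}(p)\rho_{R_{h}}(p)/p$, which is at most $\prod_{h}(1+\tilde{G}^{(h)}(p)\rho_{R_{h}}(p)/p)-1$ by expanding the product. For $\nu\geq 2$, the bound $\tilde{G}^{(h)}(p^{\nu})\leq Bp^{g\eps\nu}$ from \eqref{eq:G3} together with $\rho_{R_{h}}(p)\leq \deg(R_{h})$ yields a total contribution of $O(p^{-(2-g\eps)})$, and these errors combine into a convergent product that is absorbed into the implicit constant.

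For the lower bound with $F$ multiplicative (so that $\tilde{F}=\tilde{G}$), I would restrict the sum defining $\tilde{S}$ to tuples $n_{h}=d_{h}a_{h}$ with $d_{h}\mid D^{*\infty}$, $(a_{h},D^{*})=1$, and the $a_{h}$ squarefree and pairwise coprime. Under these restrictions, multiplicativity of $\tilde{F}$ and of the quotient $\hat{\rho}_{\mathbf{R}}/[n_{1}\kappa(n_{1}),\dotsc,n_{r}\kappa(n_{r})]$ allows the sum to factor into a $d$-part that reconstructs $\Delta_{D^{*}}$ up to a constant factor — essentially reversing the decomposition made in the proof of Lemma~\ref{lem:littlediscbound} — and an $a$-part whose local factor at each $p\nmid D^{*}$ works out to $1+\sum_{h}\tilde{F}^{(h)}(p)\rho_{R_{h}}(p)(p-1)/p^{2}$, which is comparable to $\prod_{h}(1+\tilde{F}^{(h)}(p)\rho_{R_{h}}(p)/p)$ up to a bounded correction. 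The main obstacle will be to reconcile the formally infinite Euler product structure with the truncation $n_{1}\dotsm n_{r}\leq x$; I would handle this by a Rankin-type argument in the spirit of Lemma~\ref{lem:lemma2}, showing that restricting to $n_{1}\dotsm n_{r}\leq x$ costs only a bounded multiplicative constant relative to the unrestricted sum.
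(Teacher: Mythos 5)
There is a genuine gap in your upper-bound argument at the step where you ``drop the constraint $a_{1}\dotsm a_{r}\leq x$''. The resulting unrestricted sum factors into the Euler product $\prod_{p\nmid D^{*}}\bigl(1+\sum_{h}\sum_{\nu\geq 1}\tilde{G}^{(h)}(p^{\nu})\rho_{R_{h}}(p^{\nu})/p^{\nu}\bigr)$ taken over \emph{all} primes not dividing $D^{*}$, and this product diverges in general: already for $F=\tau$ and $Q=X$ each factor is $1+2/p+\dotsb$, so the product is $+\infty$. The inequality you obtain is therefore vacuous and does not yield the finite product over $p\leq x$ that the lemma asserts. The truncation $n_{1}\dotsm n_{r}\leq x$ cannot simply be discarded; it has to be converted into a smoothness condition. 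This is exactly what the paper does: it works directly with $\tilde{S}$ (not via Lemma~\ref{lem:littlediscbound}) and applies Lemmas~\ref{lem:FisH}, \ref{lem:lemma3}, \ref{lem:lemma4} with $\sigma_{h}=1$ to replace $n_{1}\dotsm n_{r}\leq x$ by $P^{+}(n_{1}\dotsm n_{r})\leq x^{(2g-2)/\delta}$ at the cost of a bounded factor, the exponent being chosen so that $D^{*}\leq x^{(2g-2)/\delta}$ and hence every prime dividing $D^{*}$ lies in the admissible range. Only then does the sum factor as a \emph{finite} Euler product; the subsequent restriction from $p\leq x^{(2g-2)/\delta}$ down to $p\leq x$ is harmless because each omitted factor is $1+O(1/p)$ by \eqref{eq:H2}.

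Your treatment of the individual local factors (Hensel via $\Disc(R_{h})\mid D^{*}$, the elementary inequality turning the sum over $h$ into a product over $h$, and the convergent tail for $\nu_{1}+\dotsb+\nu_{r}\geq 2$) matches what the paper does with \eqref{eq:rhocoprimetodiscbound}, \eqref{eq:rhoRbound}, \eqref{eq:G1} and \eqref{eq:H3}, and is fine. For the lower bound you correctly identify the truncation as the obstacle and propose a Rankin-type device; Lemma~\ref{lem:lemma4} is precisely that device, and once it is invoked the multiplicativity of $\tilde{F}=\tilde{G}$ makes the smooth-support sum literally equal to the finite Euler product, so no separate reconstruction of $\Delta_{D^{*}}$ from a $d$-part is needed. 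The upper half of your argument needs the same repair before the proof is complete.
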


\begin{proof}
  Applying Lemmas~\ref{lem:FisH}, \ref{lem:lemma3}, \ref{lem:lemma4}
  with $\sigma_{h}=1$ ($1 \leq h \leq r$), we obtain
  \begin{equation*}
    \tilde{S} \asymp \sum_{P^{+}(n_{1} \dotsm n_{r}) \leq x^{(2g-2)/\delta}} \tilde{F}(n_{1},\dotsc,n_{r})
    \frac{ \hat{\rho}_{\mathbf{R}}(n_{1},\dotsc,n_{r}) }{ [n_{1}\kappa(n_{1}),\dotsc,n_{r}\kappa(n_{r})] }
    \text{.}
  \end{equation*}
  We have $D^{*} \ll \| Q^{*} \|^{2g-2}$.
  Since $Q^{*}|Q$, we have $\|Q^{*}\| \leq C\|Q\|$
  where $C$ depends on $g$ at most (see e.g. \cite{pritsker} for precise
  results on the norm of a factor of a polynomial).
  By our assumption $x \geq \| Q \|^{\delta}$
  we therefore have $D^{*} \leq x^{(2g-2)/\delta}$
  for $x$ large enough with respect to
  the usual parameters.
  Using this fact and $\tilde{F} \leq \tilde{G}$ and
  the submultiplicativity of $\tilde{G}$ we can write
  \begin{equation}
    \label{eq:prelittlediscbound}
    \tilde{S} \ll \Delta_{4}
    \prod_{\substack{p \leq x^{(2g-2)/\delta} \\ p \nmid D^{*}}}
    \bigg( 1 + \sideset{}{'}\sum_{\nu_{1},\dotsc,\nu_{r}}
    \tilde{G}(p^{\nu_{1}},\dotsc,p^{\nu_{r}})
    \frac{\hat{\rho}_{\mathbf{R}}(p^{\nu_{1}},\dotsc,p^{\nu_{r}})}{p^{\max_{h}(\nu_{h})+1}}
    \bigg) \text{.}
  \end{equation}
  where $\Delta_{4}$ is defined by \eqref{eq:delta4} as previously, and 
  has been proven to be $\asymp \Delta_{D^{*}}$. When $F$ is multiplicative,
  so is $\tilde{F}=\tilde{G}$, and the right-hand side of
  \eqref{eq:prelittlediscbound} is therefore also a lower bound for $\tilde{S}$.
  
  Now by \eqref{eq:H2} the main term of the product in
  \eqref{eq:prelittlediscbound} is $1 +  O(\frac{1}{p})$ and we
  can thus restrict the product to primes $p \leq x$.
  By \eqref{eq:H3} we can also restrict the inner sum in
  \eqref{eq:prelittlediscbound} to variables $\nu_{i}$
  satisfying $\nu_{1}+\dotsb+\nu_{r} \leq 2g$.
  For those values we have, by \eqref{eq:rhoRbound},
  \eqref{eq:rhocoprimetodiscbound} and \eqref{eq:G1},
  \begin{equation*}
    \tilde{G}(p^{\nu_{1}},\dotsc,p^{\nu_{r}})
    \frac{\hat{\rho}_{\mathbf{R}}(p^{\nu_{1}},\dotsc,p^{\nu_{r}})}{p^{\max_{h}(\nu_{h})+1}}
    \leq A^{2g^{2}} \frac{g^{*}}{p^{\nu_{1}+\dotsb+\nu_{r}}} \text{.}
  \end{equation*}
  We can therefore further restrict the inner sum in
  \eqref{eq:prelittlediscbound} to variables $\nu_{i}$ satisfying the condition
  $\nu_{1}+\dotsb+\nu_{r} \leq 1$. The Lemma then easily follows.
\end{proof}
  
\section{Proof of Theorem~\ref{thm:lowerbound}}
\label{sec:lowerbound}

The purpose of this section is to prove Theorem~\ref{thm:lowerbound}.
The upper bounds follow immediately from Theorem~\ref{thm:mainbound} and
Corollary~\ref{cor:Fmultbound}, we are therefore only concerned
with proving the lower bounds.

In this section we assume that the requirements of
Theorem~\ref{thm:lowerbound} are fullfilled.
We also now allow implicit constants to depend on the paramater
$\eta < 1$ on top of the usual parameters.
We retain the definitions \eqref{eq:epsilon123} of $\eps_{1}$, $\eps_{2}$ and $\eps_{3}$.

We let
\begin{equation*}
  S = \sum_{x < n \leq x+y} F\big(|Q_{1}(n)|,\dotsc,|Q_{k}(n)|\big) \text{.}
\end{equation*}
For an integer $n$ we write
\begin{align*}
  Q(n) = a_{n}b_{n}, \quad R_{h}(n) = a_{hn}b_{hn} \quad (1 \leq h \leq r)
\end{align*}
with $P^{+}(a_{n}) < x^{\eps_{3}}$, $P^{-}(b_{n}) \geq x^{\eps_{3}}$,
$P^{+}(a_{hn}) < x^{\eps_{3}}$ and $P^{-}(b_{hn}) \geq x^{\eps_{3}}$.

Since $b_{n} \geq P^{-}(b_{n})^{\Omega(b_{n)}}$ and $\|Q\| \leq x^{\frac{1}{\delta}}$ we have
\begin{equation*}
  \Omega(b_{n}) \leq \frac{\log b_{n}}{\log P^{-}(b_{n})}
  \leq \frac{\log |Q(n)|}{\log P^{-}(b_{n})}
  \leq \Big(g+\frac{1}{\delta}\Big) \frac{1}{\eps_{3}} \text{.}
\end{equation*}
By \eqref{eq:Fmino} we then have
\begin{equation*}
  \tilde{F}(b_{1n},\dotsc,b_{rn}) \geq \eta^{\Omega(b_{n})} \gg 1 \text{.}
\end{equation*}
Keeping only the integers $n$ such that $a_{1n} \dotsm a_{rn} \leq x^{\eps_{1}}$,
we obtain, by multiplicativity of $F$ and the above bound,
\begin{equation*}
  S \gg  \sum_{a_{1} \dotsm a_{r} \leq x^{\eps_{1}}} \tilde{F}(a_{1},\dotsc,a_{r})
  \sum_{\substack{x < n \leq x + y \\ a_{h} || R_{h} (n) \; (1 \leq h \leq r) \\ p|Q(n) \Rightarrow p|a_{1} \dotsm a_{r} \:\mathrm{or}\: p > x^{\eps_{3}}}}
  1 \text{.}
\end{equation*}
The inner sum can be estimated by applying Lemma~\ref{lem:sievebound},
using the fact that $Q$ has no fixed prime divisor. This yields,
as in the proof of Theorem~\ref{thm:mainbound},
\begin{equation*}
  S \gg y \prod_{g < p \leq x} \Big( 1 - \frac{\rho(p)}{p} \Big)
  \sum_{a_{1} \dotsm a_{r} \leq x^{\eps_{1}}}
  \tilde{F}(a_{1},\dotsc,a_{r})
  \frac{\hat{\rho}_{\mathbf{R}}(a_{1},\dotsc,a_{r})}{[a_{1}\kappa(a_{1}),\dotsc,a_{r}\kappa(a_{r})]}
  \lambda(a_{1}) \dotsc \lambda(a_{r})
\end{equation*}
where $\lambda(n)=(\frac{\varphi(n)}{n})^{g}$.
Applying Lemmas~\ref{lem:FisH}, \ref{lem:lemma1} with $\sigma_{h}=\lambda$,
$\theta_{h}=\lambda^{-1}$ ($1 \leq h \leq r$) to the sum over the $a_{i}$
in the above we obtain
\begin{equation*}
  S \gg y \prod_{g < p \leq x} \Big( 1 - \frac{\rho(p)}{p} \Big)
  \sum_{a_{1} \dotsm a_{r} \leq x^{\eps_{1}}}
  \tilde{F}(a_{1},\dotsc,a_{r})
  \frac{\hat{\rho}_{\mathbf{R}}(a_{1},\dotsc,a_{r})}{[a_{1}\kappa(a_{1}),\dotsc,a_{r}\kappa(a_{r})]}
  \text{.}
\end{equation*}
Further applying Lemmas~\ref{lem:FisH}, \ref{lem:lemma3}, \ref{lem:lemma4}
with $\sigma_{h}=1$ to the sum over the $a_{i}$ we recover the lower bound
in \eqref{eq:lowerbound1}.
The lower bound in \eqref{eq:lowerbound2} then
follows from Lemma~\ref{lem:littleFmultbound}.

\vspace{2.0em}

Kevin Henriot

Université de Montréal

Département de Mathématiques et de Statistique,

Pavillon André-Aisenstadt, bureau 5190,

2900 Édouard-Montpetit

Montréal, Québec, Canada, H3C 3J7

email: \texttt{henriot@dms.umontreal.ca}


\begin{thebibliography}{99}

\bibitem{pbdiviseurs}
  R. de la Bretèche and T. D. Browning,
  Le problème des diviseurs pour des formes binaires de degré 4.
  \emph{To appear in J. Reine Angew. Math.}
  (2008).
  
\bibitem{sumbinaryforms}
  R. de la Bretèche and T.B. Browning,
  Sums of arithmetic functions over values of binary forms.
  \emph{Acta Arith.}
  \textbf{125} (2007), 291--304.
  
\bibitem{daniel}
  S. Daniel,
  Uniform bounds for short sums of certain arithmetic functions
  of polynomial arguments.
  \emph{Unpublished manuscript.}

\bibitem{sievemethods}
  H. Halberstam and H.-E. Richert,
  \emph{Sieve methods.}
  (Academic Press, 1974).
  
\bibitem{holo}
  R. Holowinsky,
  Sieving for Mass equidistribution.
  \emph{Ann. of Math.}
  (2) \textbf{172} (2010), 1499-1516.  

\bibitem{holosound}
  R. Holowinsky and K. Soundararajan,
  Mass equidistribution for Hecke eigenforms.
  \emph{Ann. of Math.}
  (2) \textbf{172} (2010), 1517--1528.
  
\bibitem{lang}
  S. Lang,
  \emph{Algebra.}
  (Spinger-Verlag, 2002).

\bibitem{nagell}
  T. Nagell,
  \emph{Introduction to number theory.}
  (Wiley, 1951).
  
\bibitem{nair}
  M. Nair,
  Multiplicative functions of polynomial values in short intervals.
  \emph{Acta Arith.}
  \textbf{62} (1992), 257--269.
  
\bibitem{nairtenenbaum}
  M. Nair and G. Tenenbaum,
  Short sums of certain arithmetic functions.
  \emph{Acta Math.}
  \textbf{180} (1998), 119-144.
  
\bibitem{pritsker}
  I. E. Pritsker,
  An inequality for the norm of a polynomial factor.
  \emph{Proc. Amer. Math. Soc.}
  \textbf{129} (2001), 2283--2291.
  
\bibitem{shiu}
  P. Shiu,
  A Brun-Titschmarsh theorem for multiplicative functions.
  \emph{J. Reine Angew. Math.}
  \textbf{313} (1980), 161--170.
  
\bibitem{stewart}
  C. L. Stewart,
  On the number of solutions of polynomial congruences and Thue equations.
  \emph{J. Amer. Math. Soc.}
  \textbf{4} (1991), 793--835.

\end{thebibliography}
\end{document}